\DeclareMathAlphabet{\curly}{U}{rsfs}{m}{n}
\theoremstyle{remark}
\theoremstyle{plain}
\newtheorem{lem}{Lemma}[section]
\newtheorem{thm}{Theorem}
\numberwithin{equation}{section}
\newcommand{\li}{{\rm li}}
\newcommand{\be}{\begin{equation}}
\newcommand{\ee}{\end{equation}}
\newcommand{\benn}{\begin{equation*}}
\newcommand{\eenn}{\end{equation*}}
\newcommand{\bal}{\begin{align*}}
\newcommand{\ea}{\end{align*}}
\newcommand{\eal}{\ensuremath{\end{align*}}}
\newcommand{\bea}{\begin{eqnarray}}
\newcommand{\eea}{\end{eqnarray}}
\newcommand{\g}{\ensuremath{\gamma}}
\newcommand{\del}{\ensuremath{\delta}}
\newcommand{\eps}{\ensuremath{\varepsilon}}
\renewcommand{\(}{\left(}
\renewcommand{\)}{\right)}
\newcommand{\pfrac}[2]{\left(\frac{#1}{#2}\right)}
\renewcommand{\le}{\leqslant}
\renewcommand{\leq}{\leqslant}
\renewcommand{\ge}{\geqslant}
\renewcommand{\geq}{\geqslant}
\begin{document}

\title{Chebyshev's bias for products of two primes}

\author{Kevin Ford}
\email{ford@math.uiuc.edu, jpsneed@uiuc.edu}
\author{Jason Sneed}
\address{
Department of Mathematics,
University of Illinois at Urbana-Champaign,
1409 West Green St., Urbana, IL 61801}

\date{\today}
\begin{abstract} Under two assumptions, 
we determine the distribution of the difference between two
functions each counting the numbers $\le x$ that are in 
a given arithmetic progression modulo $q$ and the product of two primes.
 The two assumptions are (i) the Extended Riemann Hypothesis for 
Dirichlet $L$-functions modulo $q$, and (ii) that
the imaginary parts of the nontrivial zeros of these $L$-functions are 
linearly independent over the rationals.  Our results are analogs of
similar results proved for primes in arithmetic progressions 
by Rubinstein and Sarnak.
\end{abstract}

\thanks{2000 Mathematics Subject Classification:11M06, 11N13, 11N25}
\thanks{The research of K.~F. was supported in part
by National Science Foundation grants DMS-0555367 and DMS-0901339.}

\maketitle


\section{Introduction}


\subsection{Prime number races}
Let $\pi(x;q,a)$ denote the number of primes in the progression $a\!\! \mod q$.
For fixed $q$, the functions $\pi(x;q,a)$ 
(for $a\in A_q$, the set of residues coprime to $q$) 
all satisfy
\begin{equation}\label{PNT}  \pi(x,q,a) \sim \frac{x}{\varphi(q)\log x},
\end{equation}
where $\varphi$ is Euler's totient function \cite{Da}.
There are, however, curious inequities.  For example
$\pi(x;4,3) \ge \pi(x;4,1)$ seems to hold for most $x$,
an observation of Chebyshev from 1853 \cite{Ch}. 
In fact, $\pi(x;4,3) < \pi(x;4,1)$ for the first time
at $x = 26,861$ \cite{Le}. 
More generally, one can ask various questions about the behavior of
\be\label{Delta}
\Delta(x;q,a,b) := \pi(x;q,a)-\pi(x;q,b)
\ee
for distinct $a,b\in A_q$.  Does $\Delta(x;q,a,b)$ change sign
infinitely often?  Where is the first sign change?  How many sign
changes with $x\le X$ ?  What are the extreme values of
$\Delta(x;q,a,b)$?  Such questions are colloquially known 
as \emph{prime race problems}, and were studied extensively by
Knapowski and Tur\'an in a series of papers beginning with \cite{KT}.
 See the survey articles
\cite{FK} and \cite{GM} and references therein for an introduction 
to the subject and summary of major findings.  Properties
of Dirichlet $L$-functions lie at the heart of such investigations.

Despite the tendency for the function  $\Delta(x;4,3,1)$ to be negative,
Littlewood \cite{Li} showed that it changes sign
infinitely often.   Similar results have been proved for other
$q,a,b$ (see \cite{JS} and references therein).
Still, in light of Chebyshev's observation, we can ask how frequently
$\Delta(x;q,a,b)$ is positive and how often it is negative.
These questions are best addressed in the context of \emph{logarithmic
density}.  A set $S$ of positive integers has logarithmic density
$$
\del(S) =  \lim_{x\to\infty} \frac{1}{\log x}
\sum_{\substack{n\le x \\ n\in S}} \frac{1}{n}
$$
provided the limit exists.  Let $\del(q,a,b) = \del(P(q,a,b))$, where
$P(q,a,b)$ is the set of integers $n$ with $\Delta(n;q,a,b)>0$.
In 1994, Rubinstein and Sarnak \cite{RS}  showed that
$\delta(q;a,b)$ exists, assuming two
hypotheses (i) the Extended Riemann Hypothesis for Dirichlet
$L$-functions modulo $q$ (ERH$_q$), and (ii) the imaginary parts of
zeros of each Dirichlet $L$-function are linearly independent over the
rationals (GSH$_q$ - Grand Simplicity Hypothesis).
The authors also gave methods to accurately estimate the ``bias'',
for example showing that
$\delta(4;3,1)\approx 0.996$ in Chebyshev's case.
More generally, $\delta(q;a,b)=\frac12$ when $a$ and $b$ are either
both quadratic
residues modulo $q$ or both quadratic nonresidues (unbiased prime races), but
$\delta(q;a,b)>\frac12$ whenever $a$ is a quadratic
non-residue and $b$ is a quadratic residue.  A bit later we will
discuss the reasons behind these phenomena.  Sharp asymptotics for
$\delta(q;a,b)$ have recently been given by Fiorilli and Martin
\cite{FM}, which explain other properties of these densities.

%
\subsection{Quasi-prime races}
%

In this paper we develop a parallel theory for comparison of functions
$\pi_2(x;q,a)$, the number of integers $\le x$ which are in the
progression $a\!\! \mod q$ and which are the product
of two primes $p_1p_2$ ($p_1=p_2$ allowed).
Put
$$
\Delta_2(x;q,a,b) := \pi_2(x;q,a)-\pi_2(x;q,b),
$$ 
let $P_2(q,a,b)$ be the set of integers $n$ with $\Delta_2(n;q,a,b)>0$, and set $\del_2(q,a,b)=\del(P_2(q,a,b))$.
The table below shows all such
quasi-primes up to 100 grouped in residue classes modulo 4.

\vspace{.1in}
\begin{center}
\begin{tabular}{|c|c|}
\hline $pq \equiv 1 \pmod 4$ & $pq \equiv 3 \pmod 4$ \\
\hline 9 & 15\\
\hline 21 & 35\\
\hline 25 & 39\\
\hline 33 & 51\\
\hline 49 & 55\\
\hline 57 & 87\\
\hline 65 & 91\\
\hline 69 & 95\\
\hline 77 & \\
\hline 85 & \\
\hline 93 & \\
\hline
\end{tabular}
\end{center}
\vspace{.1in}

Observe that $\Delta_2(x;4,3,1) \le 0$ for $x\le 100$, and in fact
the smallest $x$ with $\Delta_2(x;4,3,1) > 0$ is $x=26747$ (amazingly
close to the first sign change of $\Delta(x;4,3,1)$).
Some years ago Richard Hudson conjectured that the bias for products
of two primes is always reversed from that of primes; i.e.,
$\delta_2(q;a,b)<\frac12$ when $a$ is a quadratic non-residue modulo
$q$ and $b$ is a quadratic residue.
Under the same assumptions as
\cite{RS}, namely ERH$_q$ and GSH$_q$, we confirm Hudson's conjecture 
and also show that the bias is less pronounced. 

\begin{thm}\label{thm1}
Let $a,b$ be distinct elements of $A_q$.  Assuming ERH$_q$ and GSH$_q$,
$\delta_2(q;a,b)$ exists.  Moreover, if $a$ and $b$ are both quadratic
residues modulo $q$ or both quadratic non-residues, then
$\delta_2(q;a,b)=\frac12$.  Otherwise, if $a$ is a quadratic
nonresidue and $b$ is a quadratic residue, then
$$ 
1 - \delta(q;a,b) < \delta_2(q;a,b) < \frac12.
$$
\end{thm}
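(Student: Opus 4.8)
The plan is to follow the method of Rubinstein and Sarnak \cite{RS}: under ERH$_q$ we produce an ``explicit formula'' for $\Delta_2(x;q,a,b)$; under GSH$_q$ we extract from it a limiting logarithmic distribution; and then we compare that distribution with the one governing the ordinary prime race $\Delta(x;q,a,b)$. Detecting the congruence $p_1p_2\equiv a\pmod q$ with Dirichlet characters yields
\[
\Delta_2(x;q,a,b)=\frac1{\varphi(q)}\sum_{\chi\ne\chi_0}\bigl(\bar\chi(a)-\bar\chi(b)\bigr)T_\chi(x),\qquad
T_\chi(x):=\sum_{\substack{p_1\le p_2\\ p_1p_2\le x}}\chi(p_1)\chi(p_2),
\]
the principal character cancelling out of the difference. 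From $2T_\chi(x)=\sum_{p_1p_2\le x}\chi(p_1)\chi(p_2)+\sum_{p^2\le x}\chi^2(p)$ and the hyperbola identity $\sum_{p_1p_2\le x}\chi(p_1)\chi(p_2)=2\sum_{p\le\sqrt x}\chi(p)\,\pi(x/p,\chi)-\pi(\sqrt x,\chi)^2$, where $\pi(y,\psi):=\sum_{p\le y}\psi(p)$, and because the contributions of $\pi(\sqrt x,\chi)^2$ and $\sum_{p^2\le x}\chi^2(p)=\pi(\sqrt x,\chi^2)$ are of smaller order under ERH$_q$ (the latter being $\sim\li(\sqrt x)$ only when $\chi$ is real), the problem reduces to the bilinear sum $\sum_{p\le\sqrt x}\chi(p)\,\pi(x/p,\chi)$.

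Next, under ERH$_q$ one has, for $\chi\ne\chi_0$, writing $\rho_\chi=\tfrac12+i\gamma_\chi$ for the nontrivial zeros of $L(s,\chi)$,
\[
\pi(y,\chi)=-\sum_{\rho_\chi}\li\!\bigl(y^{\rho_\chi}\bigr)-\tfrac12\li(\sqrt y)\,\mathbf{1}[\chi^2=\chi_0]+(\text{controlled error}),
\]
the $\li(\sqrt y)$ term being contributed by prime squares. Inserting the truncated form of this, interchanging the sums and evaluating, the contribution of a zero $\rho$ equals $-\frac{x^{\rho}}{\rho}\sum_{p\le\sqrt x}\frac{\chi(p)p^{-\rho}}{\log(x/p)}\bigl(1+O(1/\log x)\bigr)$, and the crucial new input is the Mertens-type estimate
\[
\sum_{p\le\sqrt x}\frac{\chi(p)\,p^{-\rho}}{\log(x/p)}=-c\,\frac{\log\log x}{\log x}+O\!\Bigl(\frac1{\log x}\Bigr),
\]
for an absolute constant $c>0$, uniformly over all such zeros $\rho$: since $\sum_p\chi(p)p^{-s}$ equals $\log L(s,\chi)$ plus a function holomorphic in $\Re s>\tfrac12$, it has a logarithmic singularity of residue $1$ at each $\rho$, so its weighted partial sums at $s=\rho$ drift like $-c\log\log x/\log x$. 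The same estimate, now at $s=\tfrac12$, governs the $\li(\sqrt y)$ term when $\chi$ is real --- except that there the singularity is supplied by the pole of $\zeta(2s)$ and has residue $\tfrac12$, so its coefficient is $-c/2$. Assembling, for every $\chi\ne\chi_0$,
\[
T_\chi(x)=\frac{c\,\sqrt x\,\log\log x}{\log x}\Bigl(\sum_{\rho_\chi}\frac{x^{i\gamma_\chi}}{\rho_\chi}+\tfrac12\mathbf{1}[\chi^2=\chi_0]\Bigr)+o\!\Bigl(\frac{\sqrt x\,\log\log x}{\log x}\Bigr).
\]

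Now put $E_2(x):=\frac{\varphi(q)\log x}{c\,\sqrt x\,\log\log x}\,\Delta_2(x;q,a,b)$ and $B:=\sum_{\chi\ne\chi_0,\ \chi^2=\chi_0}(\chi(a)-\chi(b))=2^{t}\bigl(\mathbf{1}[a\in(A_q)^2]-\mathbf{1}[b\in(A_q)^2]\bigr)$, where $2^{t}=|A_q/(A_q)^2|$. The previous step gives $E_2(x)=\tfrac12 B+Z(x)+o(1)$, where $Z(x):=\sum_{\chi\ne\chi_0}(\bar\chi(a)-\bar\chi(b))\sum_{\rho_\chi}x^{i\gamma_\chi}/\rho_\chi$ is real and is exactly the oscillating sum entering the Rubinstein--Sarnak formula $\frac{\varphi(q)\log x}{\sqrt x}\,\Delta(x;q,a,b)=-B-Z(x)+o(1)$. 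Under GSH$_q$, Kronecker--Weyl equidistribution of the phases $\gamma_\chi\log x$ together with the $L^2$ control of the tail $\sum_{|\gamma_\chi|>T}$ (both exactly as in \cite{RS}; the slowly varying factor $\log\log x$ does not affect the limiting logarithmic distribution) shows that $Z$ has a limiting distribution, equal to that of a random variable $W$ which, since its terms pair into real cosines with independent uniform phases, is symmetric about $0$; by \cite{RS}, $W$ has a continuous density positive on all of $\RR$. Hence $E_2$ has limiting distribution $\tfrac12 B+W$, so $\delta_2(q;a,b)=\PPP(W>-\tfrac12B)$, while correspondingly $\delta(q;a,b)=\PPP(W>B)$. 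If $a$ and $b$ are both squares or both non-squares modulo $q$, then $B=0$ and $\delta_2(q;a,b)=\PPP(W>0)=\tfrac12$ by symmetry. If $a$ is a non-square and $b$ a square, then $B=-2^{t}$ with $t\ge1$ (since $q\ge3$), so $\delta_2(q;a,b)=\PPP(W>2^{t-1})$ and $\delta(q;a,b)=\PPP(W>-2^{t})$; symmetry of $W$ gives $1-\delta(q;a,b)=\PPP(W\ge 2^{t})$ and $\PPP(W>2^{t-1})=\tfrac12-\PPP(0<W<2^{t-1})$, so positivity of the density of $W$ on the intervals $(0,2^{t-1})$ and $(2^{t-1},2^{t})$ yields $1-\delta(q;a,b)<\delta_2(q;a,b)<\tfrac12$.

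The main obstacle is the explicit formula for $T_\chi(x)$: one must bound its error so that it is genuinely $o(\sqrt x\log\log x/\log x)$, and uniformly enough to pass to the limit. This needs the uniform Mertens-type estimate above across the whole family of nontrivial zeros, careful treatment of the conditionally convergent sum $\sum_{\rho_\chi}x^{i\gamma_\chi}/\rho_\chi$ (truncating at $|\gamma_\chi|\le T$ and letting $T\to\infty$, as in \cite{RS}), bookkeeping of the higher prime-power terms and of $\pi(\sqrt x,\chi)^2$, and a check that the non-homogeneous normalisation by $\sqrt x\log\log x/\log x$ is compatible with the equidistribution machinery. Granting the explicit formula, the remaining, distribution-theoretic part of the argument is essentially formal and reuses the probabilistic results of \cite{RS} unchanged.
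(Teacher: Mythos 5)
Your distributional endgame — normalizing $\Delta_2$ by $\sqrt{x}\,\log\log x/\log x$, identifying its limiting logarithmic distribution as $\tfrac12 B+W$ with $W$ the Rubinstein--Sarnak random variable, and comparing with $\delta(q;a,b)$ — is essentially the paper's own deduction of Theorem~\ref{thm1} from Theorem~\ref{DeltaDelta2} together with Theorem RS, and that part is sound. The genuine gap is in the analytic core, i.e.\ your claimed explicit formula for $T_\chi(x)$. The ``Mertens-type estimate'' $\sum_{p\le\sqrt x}\chi(p)p^{-\rho}/\log(x/p)=-c\,\log\log x/\log x+O(1/\log x)$, asserted pointwise in $x$ and uniformly over all zeros $\rho$, is neither proved nor true as stated: the remainder carries the oscillatory contribution of all the \emph{other} zeros $\rho'$, which is not $O(1/\log x)$ pointwise; under ERH/GSH it can only be controlled after averaging over $u=\log x$ (in logarithmic mean square). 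Worse, even granting such a pointwise bound per zero, it cannot be summed with absolute values: the truncated explicit formula for $\pi(x/p,\chi)$ forces a truncation height $T$ that is a power of $x$, and since $\sum_{|\gamma|\le T}|\rho|^{-1}\asymp\log^2 T$, an error of size $\sqrt{x}/(|\rho|\log x)$ per zero aggregates to $\asymp\sqrt{x}\,\log^2 x/\log x$, which exceeds the main term $\sqrt{x}\,\log\log x/\log x$ by a factor of about $\log^2 x/\log\log x$. So the error must be shown to \emph{cancel across zeros}, and no mechanism for that is supplied; you flag it as ``the main obstacle,'' but it is the heart of the matter, not a routine verification.

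This is exactly the difficulty the paper points to when it remarks that there is no elementary way to derive the $\Delta_2$--$\Delta$ relation from $\pi_2(x;q,a)=\tfrac12\sum_{p\le x}\pi(x/p;q,ap^{-1})+\cdots$, which is precisely your hyperbola decomposition. The paper circumvents it by abandoning the zero-by-zero insertion: it applies Landau's double-integration device to $G(x,u,v;\chi)=\sum_{p_1p_2\le x}\chi(p_1p_2)\log p_1\log p_2\,p_1^{-u}p_2^{-v}$ via a truncated Perron formula (Lemmas \ref{Perron}--\ref{Gbounds}), extracts the $\log\log x/\log x$ bias from the pole of $\zeta'/\zeta(2s)$ for real $\chi$, and controls all remaining zero sums only in mean square over $\log x$ (the $\Sigma_1,\Sigma_2,\Sigma_3$ terms, resting on Lemma \ref{sumg1g2} and a dyadic choice of admissible truncation heights $T(x)$). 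Consequently the relation you need holds with an error that is $o(1)$ only in $L^2$ logarithmic average — the form of Theorem~\ref{DeltaDelta2} — which still suffices for the limiting distribution; a pointwise $o(1)$, as your sketch implicitly assumes, is not available. To salvage your route you would have to redo this $L^2$-averaging analysis for the bilinear sum $\sum_{p\le\sqrt x}\chi(p)\pi(x/p,\chi)$, at which point you are reconstructing Sections \ref{sec:analytic}--\ref{sec:TT0} of the paper in a less convenient coordinate system.
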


We can accurately estimate $\delta_2(q;a,b)$ borrowing methods from
\cite[\S 4]{RS}.  In particular we have
$$
\delta_2(4;3,1) \approx 0.10572.
$$
We deduce Theorem \ref{thm1} by connecting the distribution of 
$\Delta_2(x;q,a,b)$ with the distribution of $\Delta(x;q,a,b)$.
Although the relationship is ``simple'', there is no elementary way
to derive it, say by writing 
$$
\pi_2(x;q,a) = \frac12 \sum_{p\le x} \pi\(\frac{x}{p};q,ap^{-1}\!\! \mod q\)
+ \frac12 \sum_{\substack{p\le \sqrt{x} \\ p^2\equiv a\!\!\! \pmod{q}}} 1.
$$
In particular, our result depends strongly on the assumption that the
zeros of the $L$-functions modulo $q$ have only simple zeros.
Let $N(q,a)$ be the number of $x\in A_q$ with $x^2\equiv a\pmod{q}$,
and let $C(q)$ be the set of nonprincipal Dirichlet characters modulo $q$.

\begin{thm}\label{DeltaDelta2}
Assume ERH$_q$ and for each $\chi\in C(q)$, $L(\frac12,\chi)\ne 0$ and
the zeros of $L(s,\chi)$ are simple.  Then
$$
\frac{\Delta_2(x;q,a,b) \log x}{\sqrt{x} \log\log x} = \frac{N(q,b)-N(q,a)}{2\phi(q)} -
\frac{\log x}{\sqrt{x}} \Delta(x;q,a,b) + \Sigma(x;q,a,b),
$$
where $\frac{1}{Y} \int_1^Y |\Sigma(e^y;q,a,b)|^2\, dy = o(1)$ as $Y\to \infty$.
\end{thm}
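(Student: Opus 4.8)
The plan is to express $\pi_2(x;q,a)$ via an explicit formula involving sums over zeros of Dirichlet $L$-functions, identify the main term $\sqrt{x}\log\log x/\log x$ coming from zeros near the critical point, and match it against the known explicit formula for $\pi(x;q,a)$. The starting point is the identity
$$
\pi_2(x;q,a) = \frac12 \sum_{p\le x} \pi\!\left(\frac{x}{p};q,ap^{-1}\right) + \frac12 \sum_{\substack{p\le\sqrt x\\ p^2\equiv a\!\!\!\pmod q}} 1,
$$
which, after orthogonality of characters, reduces the problem to understanding $\sum_{p\le x}\psi(x/p;\chi)$ for each $\chi\in C(q)$, where $\psi(y;\chi)=\sum_{n\le y}\Lambda(n)\chi(n)$ (or its prime-counting analog). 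Under ERH$_q$ and simplicity of zeros, the explicit formula gives $\psi(y;\chi)=-\sum_\rho y^\rho/\rho + O(\log^2 y)$ with $\rho=\tfrac12+i\gamma$; substituting and swapping the order of summation, the key inner quantity becomes $\sum_{p\le x}(x/p)^{1/2+i\gamma}/p^{1/2+i\gamma}\cdot\frac1\rho$, i.e. essentially $\sqrt x\,x^{i\gamma}/\rho \cdot \sum_{p\le x}p^{-1-i\gamma}$.

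The heart of the matter is that $\sum_{p\le x}p^{-1-i\gamma}$ is \emph{not} $o(\log\log x)$ only when $\gamma$ is extremely small: when $\gamma=0$ (which happens precisely for real $\chi$ with a zero at $s=\tfrac12$, excluded by hypothesis) one gets $\log\log x$, and for $\gamma\ne 0$ fixed the sum $\sum_p p^{-1-i\gamma}$ converges. So the $\log\log x$ growth must instead come from the \emph{contribution of all zeros collectively}: writing $\sum_{p\le x}p^{-1-i\gamma} = \log\log x + \log L(1+i\gamma,\text{triv}) + \cdots$ is the wrong bookkeeping. The correct approach is to recognize that $\sum_{p\le x} p^{-s}$ for $s=1+it$ with $|t|$ of size $1/\log x$ behaves like $\log\log x$, and to sum the zero contributions $\sum_\gamma$ against this — the net effect is that the double sum over primes and zeros reorganizes into $\frac{\log\log x}{\sqrt x\log x}\cdot\bigl(\text{main term}\bigr) + \Delta(x;q,a,b)\cdot(-\log x/\sqrt x) + (\text{error})$. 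I would carry this out by first isolating the diagonal/near-diagonal terms: the term $\tfrac12\sum_{p^2\equiv a}1$ contributes $\tfrac12 N(q,a)/\phi(q)\cdot\text{(main order)}$ — wait, more carefully, $\sum_{p\le\sqrt x, p^2\equiv a}1 \sim \frac{N(q,a)}{\phi(q)}\cdot\frac{\sqrt x}{(1/2)\log\sqrt x}$, but this is of order $\sqrt x/\log x$, \emph{without} the $\log\log x$; so after dividing by $\sqrt x\log\log x/\log x$ it vanishes. The $N(q,a)$ term in the theorem must therefore come from somewhere subtler — likely from the $p=$ (square root) boundary interacting with the pole structure, or from a secondary term in partial summation of $\sum_p p^{-1-i\gamma}$ near $\gamma\approx 0$.

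The key steps, in order, are: (1) set up the exact identity for $\pi_2$ and apply character orthogonality to reduce to $\sum_{p\le x}\psi(x/p;\chi)$; (2) insert the truncated explicit formula for $\psi(\cdot;\chi)$ under ERH$_q$, handling the truncation error by standard estimates (valid since we only need an $L^2$-average bound on the remainder); (3) interchange summations and evaluate $\sum_{p\le x} p^{-1/2-i\gamma}\cdot(x/p)^{-i\gamma}$-type sums, splitting zeros into those with $|\gamma|\ll 1/\log x$ (these produce the $\log\log x$ main term and, via a careful expansion, the $N(q,b)-N(q,a)$ constant) and those with $|\gamma|\gg 1/\log x$ (which contribute $O(\sqrt x/\log x)$ per character plus oscillatory terms); (4) recognize $-\sum_\rho x^\rho/\rho$ in the remaining piece as exactly $\phi(q)\psi(x;\chi)$-type quantities, hence assemble the $-\frac{\log x}{\sqrt x}\Delta(x;q,a,b)$ term; (5) collect all oscillatory leftovers into $\Sigma(x;q,a,b)$ and prove $\frac1Y\int_1^Y|\Sigma(e^y)|^2\,dy=o(1)$ via Parseval/almost-periodicity, using simplicity of zeros to guarantee the frequencies $\gamma-\gamma'$ (and $\gamma$ themselves) are distinct so cross terms vanish on average.

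The main obstacle will be step (3): precisely tracking how the sum over zeros near the critical point combines with the slowly-varying prime sum to yield both the $\log\log x$ scale and the exact constant $\frac{N(q,b)-N(q,a)}{2\phi(q)}$. This requires a delicate uniform estimate for $\sum_{p\le x} p^{-1-it}$ with $t$ in a shrinking neighborhood of $0$ — essentially showing $\sum_{p\le x}p^{-1-it} = \log\log x + \log\zeta(1+it)^{-1}$-flavored asymptotics fail, and instead the truth is governed by $\log(1/|t|)$ versus $\log\log x$ — combined with counting zeros in $[0, 1/\log x]$ via ERH-conditional zero-density estimates. I expect the $N(q,a)$ constant to emerge from the contribution of the "prime squared" diagonal $p^2\le x$ inside the Dirichlet-series manipulation, specifically from the discrepancy between $\sum_p p^{-2s}$ and its value at $s=1/2+i\gamma$ summed over zeros — this is where the simplicity hypothesis is used in an essential, non-negotiable way, as emphasized in the paragraph preceding the theorem.
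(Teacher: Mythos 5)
Your starting identity is exactly the one the paper singles out as a dead end: writing $\pi_2(x;q,a)=\frac12\sum_{p\le x}\pi(x/p;q,ap^{-1})+\frac12\sum_{p\le\sqrt x,\,p^2\equiv a}1$ and inserting the explicit formula for each $\pi(x/p;\cdot)$ does not work, because after interchanging the sum over $p$ with the sum over zeros you must control $\sum_{|\gamma|\le T}\frac{x^{1/2+i\gamma}}{\rho}\sum_{p\le x}\chi(p)p^{-1/2-i\gamma}$, and the inner prime sum on the critical line is only $O\bigl((1+|\gamma|)\log^2x\bigr)$ even under ERH$_q$; summed over zeros up to the truncation height this is far larger than the target precision $\sqrt x\log\log x/\log x$, and there is no usable mechanism for the required joint cancellation in $(p,\gamma)$. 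Your own step (3) is where this surfaces, and the mechanism you propose there cannot exist: under $L(\frac12,\chi)\ne0$ the ordinates $\gamma$ are bounded away from $0$, so there are \emph{no} zeros with $|\gamma|\ll1/\log x$ for large $x$, and the factor $\log\log x$ does not come from small zeros interacting with $\sum_p p^{-1-it}$. In the paper it comes from two entirely different places: (i) the pole of $\frac{\zeta'}{\zeta}(2s)$ inside $F(s,\chi)=\sum_p\chi(p)\log p\,p^{-s}$ when $\chi$ is real (i.e.\ squares of primes), which after summing $\sum_\chi(\overline\chi(a)-\overline\chi(b))A(\chi)=N(q,a)-N(q,b)$ produces the constant $\frac{N(q,b)-N(q,a)}{2\phi(q)}$; and (ii) the kernel integral $\int\!\!\int e^{-v\log x}\,du\,dv/(u-v)\sim\log\log x/\log x$ near the diagonal $u\approx v$ in Landau's double-integral device, which multiplies the zero sum $\sum x^{i\gamma}/(\frac12+i\gamma)$ and is then recognized as $-\frac{\log x}{\sqrt x}\Delta(x;q,a,b)$. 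The paper avoids your interchange problem precisely by working with $G(x,u,v;\chi)=\sum_{p_1p_2\le x}\chi(p_1p_2)\log p_1\log p_2\,p_1^{-u}p_2^{-v}$, applying Perron and a contour shift for each fixed $(u,v)$, and only then integrating over $u,v$ with all error terms controlled in $L^2$-average over $\log x$.

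Two further misattributions would also derail the write-up. Simplicity of the zeros is not what makes the $L^2$ averaging work: the mean-square bounds for the error sums need only ERH$_q$, $L(\frac12,\chi)\ne0$, and zero-counting estimates (near-diagonal terms $|\gamma_1-\gamma_2|<1$ versus well-separated terms handled by a sum like $\sum\frac{\log(|\gamma_1|+3)\log(|\gamma_2|+3)}{|\gamma_1\gamma_2||\gamma_1-\gamma_2|}$); no distinctness of frequencies is required or used. Simplicity enters only at the very last step, to identify $\sum m^2(\gamma)x^{i\gamma}/(\frac12+i\gamma)$ with the sum $\sum x^{i\gamma}/(\frac12+i\gamma)$ appearing in the explicit formula for $\Delta(x;q,a,b)$; with multiple zeros the statement itself must be modified. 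Likewise the constant does not come from a ``discrepancy between $\sum_p p^{-2s}$ and its value at $s=\frac12+i\gamma$''; it comes from the residue of $F(s+v,\chi)$ at $s=\frac12-v$ for real $\chi$, i.e.\ from prime squares, independently of the zeros.
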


The expression for $\Delta_2$ given in Theorem \ref{DeltaDelta2} must be modified if some $L(s,\chi)$ has multiple zeros; 
see \S \ref{sec:outline} for details.

Figures 1,2 and 3 show
 graphs corresponding to $(q,a,b)=(4,3,1)$, plotted on a
logarithmic scale from $x=10^3$ to $x=10^9$.
While $\Sigma(x;4,3,1)$ appears to be oscillating around $-0.2$, this is 
caused by some terms in $\Sigma(x;4,3,1)$ of order $1/\log\log x$, and
$\log\log 10^9 \approx 3.03$.  By Theorem \ref{DeltaDelta2}, 
$\Sigma(x;4,3,1)$ will (assuming ERH$_4$ and GSH$_4$)
 eventually settle down to oscillating about 0.

\begin{figure}
\input{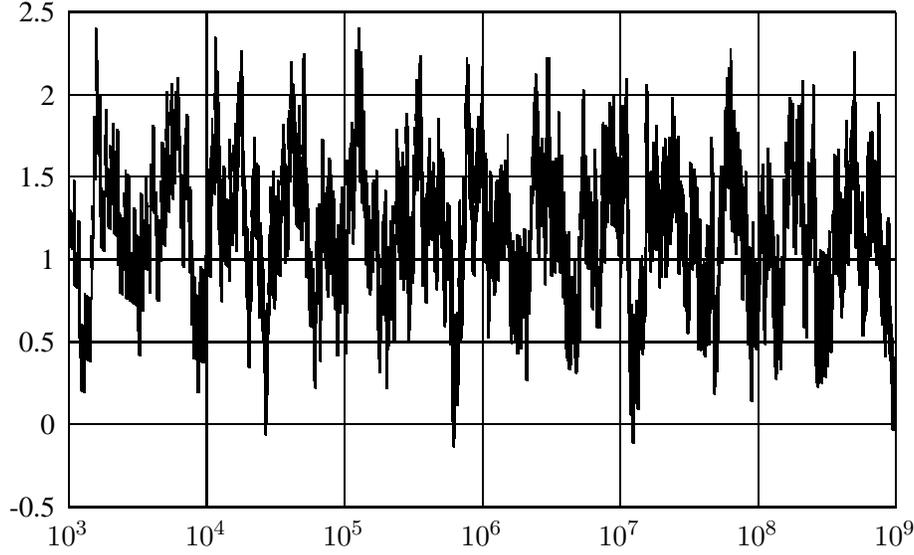}
\caption{ $\frac{\log x}{\sqrt{x}} \Delta(x;4,3,1)$ }
\end{figure}

\begin{figure}
\input{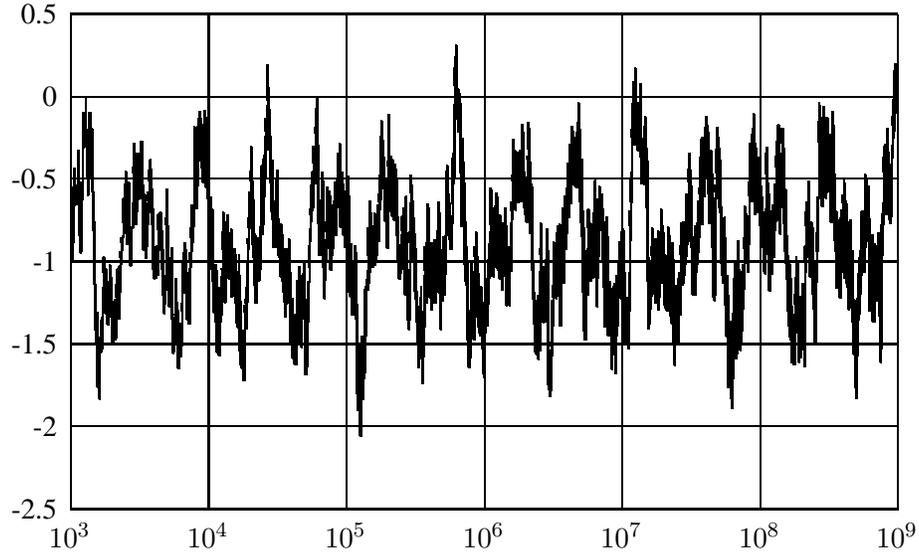}
\caption{ $\frac{\log x}{\sqrt{x}\log\log x} \Delta_2(x;4,3,1)$ }
\end{figure}

\begin{figure}
\input{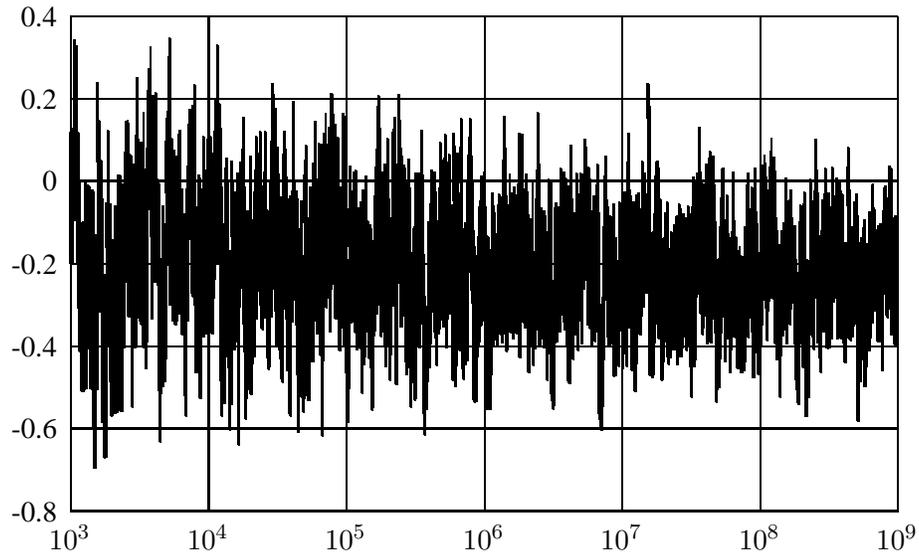}
\caption{$\Sigma(x;4,3,1)$}
\end{figure}

It is not immediate that Theorem \ref{thm1} follows from Theorem
\ref{DeltaDelta2}.  One first needs more precise information about 
the distribution of $\Delta(x;q,a,b)$ from \cite{RS}.  

\newtheorem*{theoremRS}{Theorem RS}\label{Theorem RS}
\begin{theoremRS}{\cite[\S 1]{RS}}\label{RSthm}
Assume ERH$_q$ and  GSH$_q$.   For any distinct $a,b\in A_q$, the
function
\be\label{Deltanorm}
\frac{u \Delta(e^u;q,a,b)}{e^{u/2}}
\ee
has a probabilistic distribution.  This distribution
(i) has mean $(N(q,b)-N(q,a))/\phi(q)$, (ii) is
symmetric with respect to its mean, and (iii) has
 a continuous density function. 
\end{theoremRS}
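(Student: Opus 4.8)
The plan is to follow the argument of Rubinstein and Sarnak \cite{RS}; it has three ingredients: an explicit formula writing the function in \eqref{Deltanorm} as a Besicovitch ($B^2$) almost periodic function of $u$, the Kronecker--Weyl equidistribution theorem (this is where GSH$_q$ enters), and a characteristic function computation giving absolute continuity of the limiting law. First I would convert from $\pi$ to $\psi(x;q,\cdot)$ and then to $\psi(x,\chi)=\sum_{n\le x}\chi(n)\Lambda(n)$ by partial summation, and apply ERH$_q$, so that every nontrivial zero of every $L(s,\chi)$, $\chi\in C(q)$, is $\tfrac12+i\gamma_\chi$ with $\gamma_\chi\in\RR$. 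This gives, with $x=e^u$,
\be\label{RSef}
e^{-u/2}u\,\Delta(e^u;q,a,b)=\frac{N(q,b)-N(q,a)}{\phi(q)}-\frac{1}{\phi(q)}\sum_{\chi\in C(q)}\bigl(\bar\chi(a)-\bar\chi(b)\bigr)\sum_{\gamma_\chi}\frac{e^{iu\gamma_\chi}}{\tfrac12+i\gamma_\chi}+r(u),
\ee
where the inner sum runs over ordinates of zeros of $L(s,\chi)$ (simple, under GSH$_q$) and $\tfrac1U\int_1^U|r(u)|^2\,du\to0$. The arithmetic constant absorbs the contribution of prime squares (and higher powers) in the $\pi\leftrightarrow\psi$ passage, and the mean-square bound on $r$ follows by truncating the zero sum using $\#\{|\gamma_\chi|\le T\}\ll T\log(qT)$ together with the standard variance estimate for $\psi(x,\chi)$. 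Pairing each complex $\chi$ with $\bar\chi$ via $\overline{L(s,\chi)}=L(\bar s,\bar\chi)$, the double sum in \eqref{RSef} is real, and collecting the distinct positive ordinates $\gamma_1<\gamma_2<\cdots$ of zeros of the $L$-functions modulo $q$ it may be written as $\sum_j r_j\cos(u\gamma_j-\theta_j)$ with real amplitudes $r_j\ll1/\gamma_j$; here $r_j\ne0$ precisely when the $\chi$ (unique up to conjugation) vanishing at $\tfrac12+i\gamma_j$ satisfies $\chi(a)\ne\chi(b)$, and since $a\ne b$ there are infinitely many such $j$. Note GSH$_q$ in particular forces $L(\tfrac12,\chi)\ne0$, so no $\gamma_j=0$ occurs and no extra constant term appears.

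Next I would establish the limiting distribution. Truncate \eqref{RSef} to $X_N(u)=\tfrac{N(q,b)-N(q,a)}{\phi(q)}+\sum_{\gamma_j\le T_N}r_j\cos(u\gamma_j-\theta_j)$. For fixed $N$ the finitely many frequencies involved are $\QQ$-linearly independent by GSH$_q$, so the Kronecker--Weyl theorem shows that $u\mapsto(u\gamma_j)_{\gamma_j\le T_N}\bmod2\pi$ equidistributes on the torus as $u\to\infty$; hence for every bounded Lipschitz $f$,
\be\label{RSlim}
\frac1U\int_0^U f\bigl(X_N(u)\bigr)\,du\;\longrightarrow\;\EEE\Bigl[f\Bigl(\tfrac{N(q,b)-N(q,a)}{\phi(q)}+\textstyle\sum_{\gamma_j\le T_N}r_j\cos\Theta_j\Bigr)\Bigr]=:\int f\,d\mu_N,
\ee
with $\Theta_j$ independent and uniform on $[0,2\pi)$. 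Since $\sum_j r_j^2<\infty$ (from $r_j\ll1/\gamma_j$ and the zero count), the series $\sum_j r_j\cos\Theta_j$ converges in $L^2$, the $\mu_N$ converge weakly to the law $\mu$ of its sum shifted by $\tfrac{N(q,b)-N(q,a)}{\phi(q)}$, and $\|X-X_N\|_{B^2}^2\le\sum_{\gamma_j>T_N}r_j^2\to0$; an $\eps/3$ comparison then upgrades \eqref{RSlim} to $\tfrac1U\int_0^U f(X(u))\,du\to\int f\,d\mu$ for all bounded Lipschitz $f$, so \eqref{Deltanorm} has limiting distribution $\mu$.

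It remains to read off the three properties of $\mu$. (i) Its mean is $\tfrac{N(q,b)-N(q,a)}{\phi(q)}$, since each $\cos\Theta_j$ has mean zero and the shifted series converges in $L^1$. (ii) Each $r_j\cos\Theta_j$ is a symmetric random variable ($\Theta_j$ and $\Theta_j+\pi$ have the same law), the summands are independent, and an $L^2$-limit of symmetric laws is symmetric; hence $\mu$ is symmetric about its mean. (iii) The characteristic function of $\mu$ is $\widehat\mu(\xi)=e^{i\xi(N(q,b)-N(q,a))/\phi(q)}\prod_j J_0(r_j\xi)$, $J_0$ the Bessel function of order zero; using $|J_0(t)|\le1$ and $J_0(t)\ll t^{-1/2}$ for $t\ge1$, together with a lower bound $\#\{\gamma_j\le T:r_j\ne0\}\gg T\log T$, one finds $|\widehat\mu(\xi)|$ tends to $0$ faster than any power of $|\xi|$, so $\widehat\mu\in L^1(\RR)$ and Fourier inversion gives that $\mu$ has a continuous (indeed $C^\infty$) density. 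The main obstacle is the mean-square bound on $r(u)$ in \eqref{RSef}, i.e.\ showing the zero sum represents the left side in the $B^2$ sense — this needs the variance estimates for $\psi(x,\chi)$ and a careful accounting of prime powers in the $\pi\leftrightarrow\psi$ conversion — together with securing the quantitative lower bound on the number of relevant zeros used for absolute integrability of $\widehat\mu$; the remaining steps (Kronecker--Weyl, independence of symmetric summands, Fourier inversion) are soft.
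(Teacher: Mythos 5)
Your outline is essentially the original Rubinstein--Sarnak argument (explicit formula under ERH$_q$, Kronecker--Weyl equidistribution under GSH$_q$, then the $\prod_j J_0(r_j\xi)$ characteristic-function bound and Fourier inversion), which is exactly the proof the present paper relies on by citation to \cite{RS} rather than reproving. So the proposal is correct and takes the same route as the source of Theorem RS; no further comparison is needed.
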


Assume $a$ is a quadratic nonresidue modulo $q$ and $b$ is a quadratic
residue.  Then $N(q,b)-N(q,a)>0$.
Let $f$ be the density function for the distribution of
\eqref{Deltanorm}, that is,
$$
f(t) = \frac{d}{dt} \lim_{U\to\infty} \frac{1}{U} \text{meas} \{ 0\le u\le U :
ue^{-u/2} \Delta(e^u;q,a,b) \le t \}.
$$
We see from Theorem RS that
$$
\delta(q,a,b) = \int_0^\infty f(t) \, dt > \frac12
$$
and from Theorem \ref{DeltaDelta2} that
$$
\delta_2(q,a,b) = \int_{-\infty}^{\frac{N(q,b)-N(q,a)}{2\phi(q)}} f(t) \, dt,
$$
from which Theorem \ref{thm1} follows.

Theorem \ref{DeltaDelta2} also determines the joint distribution
of any vector function
\be\label{vector2}
\frac{u}{e^{u/2}\log u} \( \Delta_2(e^u;q,a_1,b_1), \ldots,
\Delta_2(e^u;q,a_r,b_r) \).
\ee

\begin{thm}\label{thmjoint}
If $f(x_1,\ldots,x_r)$ is the density function of
$$
\frac{u}{e^{u/2}} \( \Delta(e^u;q,a_1,b_1), \ldots,
\Delta(e^u;q,a_r,b_r) \),
$$
then the joint density function of \eqref{vector2} is
$$
f\( \frac{N(q,b_1)-N(q,a_1)}{2\phi(q)} - x_1, \ldots,
\frac{N(q,b_r)-N(q,a_r)}{2\phi(q)} - x_r \). 
$$
\end{thm}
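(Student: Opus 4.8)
The plan is to deduce Theorem \ref{thmjoint} from Theorem \ref{DeltaDelta2} together with the vector-valued version of Theorem RS, which (as the statement of Theorem \ref{thmjoint} already presupposes) provides, under ERH$_q$ and GSH$_q$, the joint density $f$ of $\mathbf X(u) := ue^{-u/2}\bigl(\Delta(e^u;q,a_1,b_1),\ldots,\Delta(e^u;q,a_r,b_r)\bigr)$. Write $c_j = \frac{N(q,b_j)-N(q,a_j)}{2\phi(q)}$ and $\mathbf c=(c_1,\ldots,c_r)$. Setting $x=e^u$ in Theorem \ref{DeltaDelta2} and applying it with $(a,b)=(a_j,b_j)$ for each $j$, the random vector in \eqref{vector2} is exactly
\be\label{plan1}
\mathbf c - \mathbf X(u) + \mathbf E(u),\qquad \mathbf E(u):=\bigl(\Sigma(e^u;q,a_1,b_1),\ldots,\Sigma(e^u;q,a_r,b_r)\bigr).
\ee

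First I would show $\mathbf E$ is negligible for the purpose of limiting distributions. Theorem \ref{DeltaDelta2} gives $\frac1U\int_0^U |\Sigma(e^u;q,a_j,b_j)|^2\,du = o(1)$ for each of the finitely many $j$, so $\frac1U\int_0^U |\mathbf E(u)|^2\,du = o(1)$, and hence by Cauchy--Schwarz $\frac1U\int_0^U |\mathbf E(u)|\,du = o(1)$. Since weak convergence of the Cesàro-average measures may be tested against bounded Lipschitz functions, it suffices to note that for any such $g\colon\RR^r\to\RR$ with Lipschitz constant $L$,
\be\label{plan2}
\frac1U\int_0^U \bigl| g(\mathbf c - \mathbf X(u) + \mathbf E(u)) - g(\mathbf c - \mathbf X(u))\bigr|\,du \;\le\; L\,\frac1U\int_0^U |\mathbf E(u)|\,du \;\to\; 0 .
\ee
Thus \eqref{vector2} has a limiting distribution if and only if $\mathbf c - \mathbf X(u)$ does, with the same limit.

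Next I would push the distribution of $\mathbf X$ through the affine map $\mathbf x\mapsto \mathbf c-\mathbf x$. For bounded continuous $g$ the function $\mathbf x\mapsto g(\mathbf c-\mathbf x)$ is again bounded continuous, so by the existence of the joint density $f$ for $\mathbf X(u)$,
\be\label{plan3}
\lim_{U\to\infty}\frac1U\int_0^U g(\mathbf c - \mathbf X(u))\,du = \int_{\RR^r} g(\mathbf c-\mathbf x)\,f(\mathbf x)\,d\mathbf x = \int_{\RR^r} g(\mathbf y)\,f(\mathbf c-\mathbf y)\,d\mathbf y ,
\ee
the last step being the substitution $\mathbf y=\mathbf c-\mathbf x$, whose Jacobian has absolute value $1$. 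Combining \eqref{plan1}--\eqref{plan3} identifies $\mathbf y\mapsto f(c_1-y_1,\ldots,c_r-y_r)$ as the density of the limiting distribution of \eqref{vector2}, which is the claim.

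The only genuinely substantive ingredient is the input from \cite{RS}: one needs not merely scalar Theorem RS as quoted, but its multidimensional analogue, namely that $\mathbf X(u)$ admits an honest joint density under ERH$_q$ and GSH$_q$. This is exactly what the Kronecker--Weyl equidistribution machinery of \cite{RS} delivers when run simultaneously on the finite union of zero sets governing the several $\Delta(e^u;q,a_j,b_j)$ (linear independence over $\QQ$ of the corresponding imaginary parts being supplied by GSH$_q$). Everything else is the soft transfer argument above; the main point requiring care is simply that the $o(1)$ estimates in \eqref{plan2} hold for each of the finitely many components, which is immediate since $r$ is fixed.
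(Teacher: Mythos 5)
Your proposal is correct and follows the same route the paper intends: the paper states Theorem \ref{thmjoint} without a written proof, as an immediate consequence of Theorem \ref{DeltaDelta2} applied componentwise together with the Rubinstein--Sarnak joint distribution of the normalized $\Delta$'s, exactly as you argue. Your only additions (discarding the $\Sigma$ terms via Cauchy--Schwarz and bounded Lipschitz test functions, then the unit-Jacobian change of variables $\mathbf{y}=\mathbf{c}-\mathbf{x}$) are just the careful write-up of the step the paper leaves implicit.
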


%
\subsection{Origin of Chebyshev's bias}
%

From an analytic point of view ($L$-functions), the
weighted sum
\be\label{DeltaLambda}
\Delta^*(x;q,a,b)=\sum_{\substack{n\le x \\ n\equiv a \bmod{q} }}
\Lambda(n) - \sum_{\substack{n\le x \\ n\equiv b \bmod{q} }}
\Lambda(n),
\ee
where $\Lambda$ is the von Mangoldt function, is more natural than 
\eqref{Delta}.  Expressing $\Delta^*(x;q,a,b)$ in terms of sums over
zeros of $L$-functions in the standard way (\S 19 of \cite{Da}), we
obtain, on ERH$_q$,
$$
e^{-u/2} \phi(q) \Delta^*(e^u;q,a,b) = - \sum_{\chi\in C(q)} \(
\overline{\chi}(a)-\overline{\chi}(b) \) \sum_{\gamma}
\frac{e^{i\gamma u}}{1/2+i\gamma} + O(u^2 e^{-u/2}),
$$
where  $\g$ runs over imaginary parts of nontrivial zeros of
$L(s,\chi)$ (counted with multiplicity). 
 Hypothesis GSH$_q$ implies, in particular, that $L(1/2,\chi)\ne 0$.
Each summand $e^{i\gamma u}/(1/2+i\gamma)$ is thus a harmonic
with mean zero as $u\to\infty$, and GSH$_q$ implies that
the harmonics behave independently.  Hence, we expect that $e^{-u/2}
\phi(q) \Delta^*(e^u;q,a,b)$ will behave like a mean zero random variable.
On the other hand, the right side of \eqref{DeltaLambda} contains not
only terms corresponding to prime $n$
but terms corresponding to powers of primes. 
Applying the prime number theorem for arithmetic progressions \eqref{PNT} to the
terms $n=p^2$ in \eqref{DeltaLambda} gives
$$
\Delta^*(x;q,a,b) = \sum_{\substack{p\le x \\ p\equiv a\bmod{q} }}
\log p -  \sum_{\substack{p\le x \\ p\equiv b\bmod{q} }}
\log p + \frac{x^{1/2}}{\phi(q)} \( N(q,a)-N(q,b) \) + O(x^{1/3}).
$$
Hence, on ERH$_q$ and GSH$_q$, we expect the expression
\be\label{sump}
\frac{1}{\sqrt{x}} \Biggl( \sum_{\substack{p\le x \\ p\equiv a\bmod{q} }}
\log p -  \sum_{\substack{p\le x \\ p\equiv b\bmod{q} }}
\log p \Biggr)
\ee
to behave like a random variable with mean $(N(q,b)-N(q,a))/\phi(q)$.  Finally,
the distribution of $\Delta(x;q,a,b)$ is obtained from the distribution
of \eqref{sump} and
partial summation.

%
\subsection{Analyzing $\Delta_2(x;q,a,b)$}
%

A natural analog of $\Delta^*(x;q,a,b)$ is
\be\label{DeltaLambda2}
\sum_{\substack{mn\le x \\ mn\equiv a\bmod{q} }}
\Lambda(m)\Lambda(n)
- \sum_{\substack{mn\le x \\ mn\equiv b\bmod{q} }}
\Lambda(m)\Lambda(n).
\ee
As with $\Delta^*(x;q,a,b)$,
the expression in \eqref{DeltaLambda2} can be easily written as a
sum over zeros of $L$-functions plus a small error.
The main problem now is that the principal summands, namely $\log p_1
\log p_2$ for primes $p_1,p_2$,
are very irregular as a function of $p_1 p_2$, and thus
estimates for $\Delta_2(x;q,a,b)$ cannot be recovered by partial summation.
We get around this problem using a double integration, a method which
goes back to Landau \cite[\S 88]{La}.   We have
\be\label{DeltaG}
\begin{split}
\Delta_2(x;q,a,b) &= \frac{1}{\phi(q)} \sum_{\chi \in C(q)} \(
\overline{\chi}(a) -  \overline{\chi}(b) \) \sum_{\substack{n=p_1
    p_2\le x \\ p_1 \le p_2}} \chi(n) \\
&= \frac{1}{2\phi(q)}  \sum_{\chi \in C(q)} \(
\overline{\chi}(a) -  \overline{\chi}(b) \)
\int_0^\infty \int_0^\infty G(x,u,v;\chi) \, du\, dv +
O\pfrac{\sqrt{x}}{\log x},
\end{split}
\ee
where
\begin{equation} \label{Gsum}
G(x,u,v;\chi) = \sum_{p_1 p_2 \leq x} \frac{\chi(p_1 p_2) \log p_1
  \log p_2}{p_1^u p_2^v}.
\end{equation}
The related functions
$$
G^*(x,u,v;\chi) = \sum_{mn \leq x} \frac{\chi(mn) \Lambda(m)
  \Lambda(n)}{m^u n^v}
$$
are more ``natural'' from an analytic point of view, being easily
expressed in terms of zeros of Dirichlet $L$-functions.  By the
reasoning of the previous subsection, each $G^*(x,u,v;\chi)$ is
expected to be unbiased, the bias in $\Delta_2(x;q,a,b)$ originating
from the summands in  $G^*(x,u,v;\chi)$ where $m$ is not prime or $n$
is not prime.

%
\subsection{A heuristic argument for the bias in $\Delta_2(x;q,a,b)$}
%

We conclude this introduction with a heuristic
evaluation of the bias in $\Delta_2(x;q,a,b)$, which originates from
the difference between functions $G(x;u,v;\chi)$ and $G^*(x,u,v;\chi)$.
For simplicity of exposition, we'll concentrate on the special case
$(q,a,b)=(4,3,1)$.  In this case, the
bias arises from terms $p_1 p_2^2$ and $p_1^2 p_2^2$
which appear in  $G^*(x;u,v;\chi)$ but not in $G(x,u,v;\chi)$.
Let $\chi$ be the non-principal character modulo 4, so that
$$
\frac12 \int_0^\infty \int_0^\infty
 \bigl( G^*(x,u,v;\chi) - G(x,u,v;\chi) \bigr)\, du\, dv =
\frac12 \sum_{\substack{p_1^a p_2^b \le x \\ \max(a,b)\ge 2}}
\frac{\chi(p_1^a p_2^b)}{ab}.
$$
There are $O(x^{1/2}/\log x)$ terms with $\min(a,b)\ge 2$ and
$\max(a,b)\ge 3$.  By the prime
number theorem and partial summation,
$$
\frac12 \sum_{p_1^2 p_2^2 \le x} \frac14 = \frac18 \sum_{p\le
  \sqrt{x}} \pi \( \sqrt{x/p^2} \) 
\sim \frac{x^{1/2} \log\log x}{2\log x}.
$$
Thus,
\begin{align*}
\Delta_2(x;4,3,1) &= - \frac{1}{2} \sum_{mn \leq x} \frac{\chi(mn) \Lambda(m)
  \Lambda(n)}{\log m \log n} - \Bigg( \sum_{k=2}^{\infty} \frac{1}{k}
\sum_{p_1^k \leq x} \chi(p_1^k) \Delta(x/p_1^k;4,3,1) \Bigg) \\
&\qquad +\( \frac12+o(1) \)\frac{x^{1/2} \log\log x}{\log x}.
\end{align*}
By Theorem RS, $\Delta(y;4,3,1)=y^{1/2}/\log y + E(y)$, 
where $E(y)$ oscillates with mean 0.  Thus,
$$
\sum_{k=2}^{\infty} \frac{1}{k}
\sum_{p_1^k \leq x} \chi(p_1^k) \Delta(x/p_1^k;4,3,1) =
\sum_{k=2}^{\infty} \frac{2}{k}
\sum_{p_1^k \leq x} \chi(p_1^k) \frac{\sqrt{x/p_1^k}}{\log(x/p_1^k)} +
E'(x),
$$
where $E'(x)$ is expected to oscillate with mean zero.  The $k=2$ terms are
$$
\sum_{p_1^2 \leq x} \frac{\sqrt{x/p_1^2}}{\log(x/p_1^2)} \sim
\frac{\sqrt{x} \log\log x}{\log x},
$$
while the terms corresponding to $k\ge 3$ contribute 
$$
\ll \sum_{k=3}^{\infty} \frac{1}{k}
\sum_{p_1^k \leq x} \frac{\sqrt{x/p_1^k}}{\log(x/p_1^k)} \ll
\frac{\sqrt{x}}{\log x}.
$$
Thus, we find that
\begin{align*}
\Delta_2(x;4,3,1) &= - \frac{1}{2} \sum_{mn \leq x} \frac{\chi(mn) \Lambda(m)
  \Lambda(n)}{\log m \log n} -
\( \frac12+o(1) \)\frac{x^{1/2} \log\log x}{\log x} + E'(x).
\end{align*}

%
\subsection{Further problems}
%
It is natural to consider the distribution, in arithmetic
progressions, of numbers
composed of exactly $k$ prime factors, where $k\ge 3$ is fixed.  As with the
cases $k=1$ and  $k=2$, we expect there to be no bias if we count all
numbers $p_1^{a_1} p_2^{a_2} \cdots p_k^{a_k}$ with weight $(a_1
\cdots a_k)^{-1}$.  If, however, we count terms which are the product
of precisely $k$ primes (that is, numbers $p_1^{a_1} \cdots p_j^{a_j}$
with $a_1+\cdots+a_j=k$), then there will be a bias.  Hudson has
conjectured that the bias will be in the same direction as for primes
when $k$ is odd, and in the opposite direction for even $k$.  We
conjecture that, in addition, the bias becomes less pronounced as $k$
increases.

%
%
%
\section{Preliminaries}
%
%
%

With $\chi$ fixed, the letter $\gamma$, with or without subscripts, denotes the
imaginary part of a zero of $L(s,\chi)$ inside the critical strip.  In
sums over $\g$, each term appears with its multiplicity $m(\g)$ unless we
specify that we sum over distinct $\g$.  Constants implied by $O-$ and
$\ll-$symbols depend only on $\chi$ (and hence, on $q$)
unless additional dependence is indicated with a subscript.  
Let 
$$
A(\chi) = \begin{cases} 1 & \chi^2=\chi_0 \\ 0 & \text{else} \end{cases},
$$
where $\chi_0$ is the principal character modulo $q$.  That is,
$A(\chi)=1$ if and only if $\chi$ is a real character.  
For $\chi\in C(q)$, define
$$
F(s,\chi)= \sum_{p} \frac{\chi(p) \log p}{p^s}.
$$
The following estimates are standard; see e.g. \cite[\S 15,16]{Da}.

\begin{lem} \label{FsHs} Let $\chi\in C(q)$,
assume ERH$_q$ and fix $c>\frac13$.
Then $F(s,\chi)= -\frac{L'}{L}(s,\chi) +
A(\chi) \frac{\zeta'}{\zeta}(2s) + H(s,\chi)$, where $H(s,\chi)$ is
analytic and uniformly  bounded in the half-plane $\Re s \ge c$.
\end{lem}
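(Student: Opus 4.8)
The plan is to peel $F(s,\chi)$ off the logarithmic derivative of $L(s,\chi)$ and track the prime-power remainder. For $\Re s>1$, expanding the Euler product gives
$$-\frac{L'}{L}(s,\chi)=\sum_{n\ge1}\frac{\Lambda(n)\chi(n)}{n^s}=\sum_{k\ge1}\sum_p\frac{\chi(p)^k\log p}{p^{ks}}=F(s,\chi)+R(s,\chi),\qquad R(s,\chi):=\sum_{k\ge2}\sum_p\frac{\chi(p)^k\log p}{p^{ks}},$$
so it suffices to show $-R(s,\chi)=A(\chi)\frac{\zeta'}{\zeta}(2s)+H(s,\chi)$ with $H$ holomorphic and bounded on $\Re s\ge c$. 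First I would split $R=R_2+R_{\ge3}$ according to $k=2$ and $k\ge3$. The tail $R_{\ge3}(s,\chi)=\sum_{k\ge3}\sum_p\chi(p)^k\log p\,p^{-ks}$ is easy: since $3c>1$ it converges absolutely and locally uniformly on $\Re s>1/3$, hence is holomorphic and $O(1)$ on $\Re s\ge c$, and it goes straight into $H$. So the content is entirely in $R_2(s,\chi)=\sum_p\chi(p)^2\log p\,p^{-2s}=F(2s,\chi^2)$.

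Next I would branch on whether $\chi$ is real. If $\chi^2=\chi_0$ (so $A(\chi)=1$), then $\chi(p)^2=1$ for $p\nmid q$ and $0$ otherwise, whence $R_2(s,\chi)=\sum_p\log p\,p^{-2s}-\sum_{p\mid q}\log p\,p^{-2s}$. Substituting $-\frac{\zeta'}{\zeta}(2s)=\sum_{k\ge1}\sum_p\log p\,p^{-2ks}$ gives $\sum_p\log p\,p^{-2s}=-\frac{\zeta'}{\zeta}(2s)-\sum_{k\ge2}\sum_p\log p\,p^{-2ks}$, and the last sum is holomorphic and bounded on $\Re s>1/4$. Thus $-R_2(s,\chi)=\frac{\zeta'}{\zeta}(2s)$ plus holomorphic bounded terms on $\Re s\ge c$; the $\frac{\zeta'}{\zeta}(2s)$ is exactly the $A(\chi)\frac{\zeta'}{\zeta}(2s)$ term, and what remains (the finite sum over $p\mid q$, the even-prime-power sum, and $R_{\ge3}$) is manifestly holomorphic and bounded on $\Re s\ge c>1/3$. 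If $\chi^2\ne\chi_0$ (so $A(\chi)=0$), let $\psi$ be the primitive character inducing $\chi^2$; the same peeling as above yields $F(2s,\chi^2)=-\frac{L'}{L}(2s,\psi)+(\text{finitely many Euler corrections at }p\mid q)+\sum_{k\ge2}\sum_p\chi^2(p)^k\log p\,p^{-2ks}$. On $\Re s\ge c>1/3$ one has $\Re(2s)>2/3>1/2$, so ERH$_q$ (which yields ERH for $L(\cdot,\psi)$) makes $L(\cdot,\psi)$ zero-free there; hence $-\frac{L'}{L}(2s,\psi)$ is holomorphic on $\Re s\ge c$, its size being controlled by the standard conditional estimates for $L'/L$ in \cite[\S\S15--16]{Da}, while the last sum converges for $\Re(2s)>1/2$. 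Assembling the pieces gives $H(s,\chi)$ with $A(\chi)=0$.

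The main obstacle is the $k=2$ term $F(2s,\chi^2)$, and in particular its behaviour on and near $\Re s=1/2$: when $\chi$ is real this term carries a genuine pole at $s=1/2$ inherited from the pole of $\zeta(w)$ at $w=1$, and the essential point of the lemma is that this pole — and nothing else — must be subtracted off as $A(\chi)\frac{\zeta'}{\zeta}(2s)$, after which $H$ extends holomorphically across $\Re s=1/2$; the remaining holomorphy and size control for the $L'/L$-type contributions is precisely where ERH$_q$ (zero-freeness for $\Re s>1/2$) and the conditional estimates of \cite{Da} are used. The $k\ge3$ terms and the Euler-factor corrections at $p\mid q$ are routine. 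Accordingly I would present the write-up as the three-way split $k=1$ / $k=2$ / $k\ge3$, with the $k=2$ case further branched on whether $\chi$ is real, citing \cite{Da} for the analytic facts about $L(s,\chi)$ and $\zeta(s)$ under ERH$_q$.
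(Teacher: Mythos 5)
Your reduction is the standard (and, since the paper offers no proof of this lemma beyond the citation to \cite{Da}, essentially the intended) one: write $-\frac{L'}{L}(s,\chi)=F(s,\chi)+\sum_{k\ge2}\sum_p\chi(p)^k\log p\,p^{-ks}$, note the $k\ge3$ tail and the Euler corrections at $p\mid q$ are holomorphic and bounded for $\Re s\ge c>\frac13$, and identify the $k=2$ block as $F(2s,\chi^2)$, which for real $\chi$ produces exactly the $\frac{\zeta'}{\zeta}(2s)$ term. That part of your write-up, including the fact that the real-character case needs no ERH at all, is correct. (Minor slip: $F(2s,\chi^2)=-\frac{L'}{L}(2s,\psi)-\sum_{k\ge2}\sum_p\psi(p)^k\log p\,p^{-2ks}-\sum_{p\mid q}\psi(p)\log p\,p^{-2s}$, with minus signs on the correction terms.)

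The gap is in the case $\chi^2\ne\chi_0$. There your $H$ necessarily contains $\frac{L'}{L}(2s,\psi)$, and the lemma demands that this be \emph{uniformly bounded} on $\Re s\ge c$, i.e.\ on $\Re(2s)\ge 2c$, where $2c$ may be any number exceeding $\frac23$ and in particular may be less than $1$. ERH does give analyticity there (since $2c>\frac12$), but the ``standard conditional estimates'' you invoke give only growth of the shape $\ll\bigl(\log(q(|\Im s|+2))\bigr)^{2-2\Re(2s)}$, not boundedness; and uniform boundedness is in fact false: for any fixed $\sigma_0<1$, $\log|L(\sigma_0+it,\psi)|$ is unbounded in $t$ (classical $\Omega$-results inside the critical strip), while $\log|L(2+it,\psi)|=O(1)$, so $\sup_{u\in[\sigma_0,2]}\bigl|\frac{L'}{L}(u+it,\psi)\bigr|\to\infty$ along a sequence of $t$, i.e.\ $\frac{L'}{L}(w,\psi)$ is unbounded on every half-plane $\Re w\ge\sigma_0$ with $\sigma_0<1$. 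So ``assembling the pieces'' cannot yield a bounded $H$ when $A(\chi)=0$ and $\frac13<c<\frac12$; the statement as written is too strong for complex characters. What your argument does prove, and what every application in the paper actually uses, is the amended version: $H(s,\chi)$ is analytic for $\Re s\ge c>\frac13$ with $H(s,\chi)\ll\log^2\bigl(q(|\Im s|+2)\bigr)$ there (enough for Lemma~\ref{zeta} and Lemma~\ref{Gbounds}), and uniformly bounded for $\Re s\ge c>\frac12$ (enough for the step $F=-\frac{L'}{L}+O(1)$ in Section~\ref{sec:analytic}, where $\Re s\ge\frac12+\eps$ forces $\Re(2s)>1$). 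You should say explicitly that this weaker conclusion is what you establish in the non-real case.
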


\begin{lem} \label{NTchi} 
Let $\chi$ be a Dirichlet character modulo $q$.
Let $N(T,\chi)$ denote the
number of zeros of $L(s, \chi)$ with $0<\Re s<1$ and
$|\Im s|<T$.   Then
\begin{enumerate}
\item  $N(T,\chi)=O(T\log(qT))$ for $T\ge 1$.
\item  $N(T,\chi) - N(T-1,\chi) = O(\log(qT))$ for $T\ge 1$.
\item  Uniformly for $s=\sigma+it$ and $\sigma\ge -1$,
$$
\frac{L'(s,\chi)}{L(s,\chi)} =
  \sum_{|\gamma-t|<1} \frac{1}{s-\rho} + O(\log q(|t|+2)).
$$
\item $-\frac{\zeta'}{\zeta}(\sigma)  = \frac{1}{\sigma -1}+O(1) $ uniformly
  for $\sigma\ge \frac12$, $\sigma\ne 1$.
\item $\big|\frac{\zeta'}{\zeta}(\sigma+iT) \big| \le
  -\frac{\zeta'}{\zeta}(\sigma)$ for $\sigma > 1$.
\end{enumerate}
\end{lem}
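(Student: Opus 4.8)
All five assertions are classical and unconditional (ERH is not needed here); I would organize the argument around a single quantitative input and then read off the pieces, taking care throughout to track the dependence on $q$. The input is the Hadamard factorization of the completed function $\xi(s,\chi)$. Reducing first to primitive $\chi$ (an imprimitive character differs from the primitive one inducing it only by finitely many Euler factors, whose logarithmic derivatives are $O(\log q)$), differentiation of the product gives, for some constant $B(\chi)$ and some $\kappa\in\{0,1\}$ depending on the parity of $\chi$,
\[
\frac{L'}{L}(s,\chi)=B(\chi)-\frac12\log\frac{q}{\pi}-\frac12\frac{\Gamma'}{\Gamma}\Bigl(\frac{s+\kappa}{2}\Bigr)+\sum_\rho\Bigl(\frac{1}{s-\rho}+\frac{1}{\rho}\Bigr),
\]
the sum over nontrivial zeros $\rho=\beta+i\gamma$. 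Evaluating at $s=2+it$, where the left side is $O(1)$ and Stirling handles the $\Gamma$-term, and taking real parts (recall $0\le\beta\le1$, so $\Re\frac{1}{2+it-\rho}=\frac{2-\beta}{(2-\beta)^2+(t-\gamma)^2}\gg\frac{1}{1+(t-\gamma)^2}$) yields the clustering estimate
\[
\sum_\rho\frac{1}{1+(t-\gamma)^2}=O\bigl(\log q(|t|+2)\bigr).
\]

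From this, part (2) is immediate: the number of $\gamma$ with $|\gamma-t|\le1$ is at most $2\sum_{|\gamma-t|\le1}\frac{1}{1+(t-\gamma)^2}=O(\log q(|t|+2))$, and applying this at $t=\pm(T-\tfrac12)$ bounds $N(T,\chi)-N(T-1,\chi)$. Part (1) then follows by telescoping (2) over the $O(T)$ unit intervals from $0$ to $T$, each contributing $O(\log(qT))$; this is cleaner than rerunning the argument principle with convexity bounds on vertical lines, and avoids invoking the functional equation a second time. For part (3), I would subtract the displayed identity at $s$ from the same identity at $2+it$ (same $t$): the constant $B(\chi)$ and the $\sum 1/\rho$ terms cancel, the difference of $\Gamma$-factors is $O(\log(|t|+2))$ by Stirling uniformly for $-1\le\sigma\le2$ (and for $\sigma>2$ the claim is trivial since $L'/L$ is itself $O(1)$ there), and the zero sum becomes $\sum_\rho\bigl(\frac{1}{s-\rho}-\frac{1}{2+it-\rho}\bigr)$. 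For $|\gamma-t|\ge1$ each summand is $O\bigl(1/(t-\gamma)^2\bigr)$ uniformly in $\sigma\ge-1$, so by the clustering estimate these contribute $O(\log q(|t|+2))$; for $|\gamma-t|<1$ the poles $\frac{1}{s-\rho}$ remain (they are matched on the left side), while the finitely many terms $\frac{1}{2+it-\rho}$ are each $O(1)$ and number $O(\log q(|t|+2))$ by part (2). This is precisely (3).

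Parts (4) and (5) are elementary and require no $L$-function machinery. For (5), the Dirichlet series $-\frac{\zeta'}{\zeta}(\sigma+iT)=\sum_{n\ge1}\Lambda(n)n^{-\sigma-iT}$ converges absolutely for $\sigma>1$, so $\bigl|\frac{\zeta'}{\zeta}(\sigma+iT)\bigr|\le\sum_n\Lambda(n)n^{-\sigma}=-\frac{\zeta'}{\zeta}(\sigma)$, the right side being positive. For (4), $\zeta$ has no zeros on the real half-line $[\tfrac12,\infty)$ (indeed $\zeta(\sigma)<0$ on $(0,1)$ and $>0$ on $(1,\infty)$), so $\frac{\zeta'}{\zeta}$ is analytic there apart from the contribution $\frac{-1}{\sigma-1}$ coming from the simple pole of $\zeta$ at $1$; the remaining analytic part is bounded on the compact set $[\tfrac12,2]$, and $-\frac{\zeta'}{\zeta}(\sigma)=\sum_n\Lambda(n)n^{-\sigma}\to0$ as $\sigma\to\infty$, so it is $O(1)$ uniformly on $\sigma\ge\tfrac12$, $\sigma\ne1$.

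\textbf{Main obstacle.} There is no genuine difficulty here; the only thing needing attention is the uniformity in $q$ in parts (1)--(3), which forces one to keep track of the $\log q$ hidden in $B(\chi)$, in the conductor factor, and in the reduction of imprimitive to primitive characters. This is routine bookkeeping --- it is exactly the content of \cite[\S\S 12--16]{Da} --- rather than a real hurdle.
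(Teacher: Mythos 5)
Your proof is correct, but note that the paper does not prove this lemma at all: it simply cites it as standard, referring to Davenport \S 15--16. Your argument via the Hadamard factorization, the clustering estimate $\sum_\rho (1+(t-\gamma)^2)^{-1}=O(\log q(|t|+2))$, and the elementary treatment of (4) and (5) is precisely the standard proof in that reference, so there is nothing to compare beyond saying you have supplied the omitted classical details correctly.
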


For a suitably small, fixed $\delta>0$, we say that a number $T\ge 2$ is
\emph{admissible} if for all $\chi\in C(q)\cup \{\chi_0\}$ and all zeros $\frac12+i\g$ of $L(s,\chi)$,
$|\g-T| \ge \delta (\log T)^{-1}$.
By Lemma \ref{NTchi}, we can choose $\del$ small enough, depending
on $q$, so that
there is an admissible $T$ in $[U,U+1]$ for all $U\ge 2$.
From Lemma \ref{NTchi} we obtain

\begin{lem}\label{zeta} 
 Uniformly for $\sigma \ge \frac25$ and admissible $T\ge 2$,
$$
|F(\sigma+iT,\chi)| = O(\log^2 T).
$$
\end{lem}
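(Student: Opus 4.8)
The plan is to read off the bound directly from Lemma~\ref{FsHs} and the explicit-formula estimates in Lemma~\ref{NTchi}. Fix $c=\tfrac25$ in Lemma~\ref{FsHs} (legitimate since $\tfrac25>\tfrac13$); then for $\sigma\ge\tfrac25$ and any $T$,
$$
F(\sigma+iT,\chi) = -\frac{L'}{L}(\sigma+iT,\chi) + A(\chi)\,\frac{\zeta'}{\zeta}(2\sigma+2iT) + O(1),
$$
so it suffices to bound the first term by $O(\log^2 T)$ and the second by $O(\log T)$.

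First I would treat the $L'/L$ term. Put $s=\sigma+iT$ and use part (3) of Lemma~\ref{NTchi}: under ERH$_q$ the zeros are $\rho=\tfrac12+i\g$, and
$$
\frac{L'}{L}(s,\chi) = \sum_{|\g-T|<1}\frac{1}{s-\rho} + O(\log q(T+2)).
$$
The key point is that $|s-\rho| = \bigl|(\sigma-\tfrac12)+i(T-\g)\bigr| \ge |T-\g|$, and since $T$ is admissible this is $\ge \delta(\log T)^{-1}$ for every zero, so each summand is $O(\log T)$. By part (2) of Lemma~\ref{NTchi} the number of summands is $O(\log(qT)) = O(\log T)$, whence $\frac{L'}{L}(s,\chi) = O(\log^2 T)$. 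This is precisely where admissibility is needed: because $\sigma$ may be as small as $\tfrac25<\tfrac12$, the real-part offset $\sigma-\tfrac12$ gives no lower bound on $|s-\rho|$ by itself.

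Next I would treat the $\zeta'/\zeta(2s)$ term, present only when $\chi$ is real (so $A(\chi)=1$). With $w=2\sigma+2iT$, the analogue of part (3) of Lemma~\ref{NTchi} for the Riemann zeta function gives $\frac{\zeta'}{\zeta}(w) = \sum_{|\g'-2T|<1}\frac{1}{w-\rho'} + O(\log(2T+2))$, where $\rho'=\tfrac12+i\g'$ runs over zeros of $\zeta$. Here $|w-\rho'| \ge 2\sigma-\tfrac12 \ge \tfrac45-\tfrac12 = \tfrac{3}{10}$, so each summand is $O(1)$; this is where the hypothesis $\sigma\ge\tfrac25$ enters, and note that one does \emph{not} need $2T$ to be admissible. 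Since the number of summands is again $O(\log T)$, we obtain $\frac{\zeta'}{\zeta}(2s) = O(\log T)$, and combining the three estimates yields the lemma.

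There is no real obstacle; the only thing to keep straight is the division of labor between the two hypotheses — admissibility of $T$ handles $L'/L$, whose zeros can lie essentially on the line of evaluation, while the weaker condition $\sigma\ge\tfrac25$ handles $\zeta'/\zeta(2s)$, whose argument has real part bounded away from $\tfrac12$. One should also verify the minor bookkeeping: $\tfrac25>\tfrac13$ so that $H(s,\chi)=O(1)$ on $\Re s\ge\tfrac25$, and $2\sigma\ge\tfrac45\ge-1$ so that Lemma~\ref{NTchi} applies at the doubled argument.
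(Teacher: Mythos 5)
Your proof is correct and is exactly the argument the paper intends (Lemma \ref{zeta} is stated there without proof as an immediate consequence of Lemmas \ref{FsHs} and \ref{NTchi}): admissibility of $T$ controls the zero sum for $L'/L(\sigma+iT,\chi)$, and for $\zeta'/\zeta(2\sigma+2iT)$ the real-part gap $2\sigma-\tfrac12\ge\tfrac{3}{10}$ does the work, with no admissibility condition at height $2T$. The one point worth stating explicitly is that the bound $|w-\rho'|\ge 2\sigma-\tfrac12$ presupposes RH for $\zeta$, which is indeed available since ERH$_q$ covers $L(s,\chi_0)$, whose critical-strip zeros are exactly those of $\zeta$.
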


\begin{lem} \label{sumg1g2}
Fix $\chi\in C(q)$ and assume $L(\frac12,\chi)\ne 0$.
For $A\ge 0$ and real $k\ge 0$,
$$
\sum_{\substack{ |\g_1|, |\g_2| \ge A \\ |\g_1-\g_2| \geq
     1}} \frac{{\log}^k (|\g_1|+3) {\log}^k (|\g_2|+3)}{{|\g_1|}
   {|\g_2|} |\g_1-\g_2|} \ll_{k} \frac{(\log (A+3))^{2k+3}}{A+1}.
$$
\end{lem}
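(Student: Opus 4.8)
The plan is to bound the double sum by fixing $\gamma_1$, estimating the inner sum over $\gamma_2$ by $\log^{k+2}(|\gamma_1|+3)/|\gamma_1|$, and then summing the resulting one–variable series. Write $w(t)=\log^k(t+3)$. First I would reduce to the case $A\geq 1$: since ERH$_q$ holds and $L(\tfrac12,\chi)\neq 0$, every $\gamma$ satisfies $|\gamma|\gg 1$ and only $O(1)$ of them have $|\gamma|\leq 1$; moreover for any fixed $\gamma_1$ with $|\gamma_1|$ bounded, $\sum_{|\gamma_1-\gamma_2|\geq 1} w(|\gamma_2|)/(|\gamma_2|\,|\gamma_1-\gamma_2|)\ll_k 1$ (in the tail $|\gamma_2|\to\infty$ the summand is $\ll w(|\gamma_2|)/|\gamma_2|^2$, which is summable). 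Hence replacing the constraint $|\gamma_i|\geq A$ by $|\gamma_i|\geq\max(A,1)$ alters the sum by $O_k(1)$, which is absorbed into the right–hand side because $(\log(A+3))^{2k+3}/(A+1)$ is bounded below by a positive constant (of $k$) for $A$ bounded. So from now on all $\gamma$ in the sum have $|\gamma|\geq A\geq 1$. The only analytic input is Lemma~\ref{NTchi}, used in the form: for $x\geq 0$, $h\geq 1$, the number of $\gamma$ (with multiplicity) with $\gamma\in[x,x+h]$ or with $\gamma\in[-x-h,-x]$ is $\ll (h+1)\log(x+h+2)$; in particular the number with $|\gamma|\in[2^j,2^{j+1})$ is $\ll (j+1)2^j$. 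Everything after that is dyadic bookkeeping.

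The key step is the inner estimate: for fixed $\gamma_1$ with $V:=|\gamma_1|\geq A\geq 1$,
\[
\Sigma(\gamma_1):=\sum_{\substack{|\gamma_2|\geq A\\ |\gamma_1-\gamma_2|\geq 1}}\frac{w(|\gamma_2|)}{|\gamma_2|\,|\gamma_1-\gamma_2|}\ll\frac{\log^{k+2}(V+3)}{V}.
\]
I would prove this by splitting the $\gamma_2$–range into three pieces. When $|\gamma_2|\leq V/2$ we have $|\gamma_1-\gamma_2|\geq V/2$, so the summand is $\leq (2/V)\,w(|\gamma_2|)/|\gamma_2|$ and the contribution is $\ll V^{-1}\sum_{1\leq|\gamma_2|\leq V/2}w(|\gamma_2|)/|\gamma_2|\ll V^{-1}\log^{k+2}(V+3)$ by dyadic summation (using $\#\{|\gamma|\in[2^j,2^{j+1})\}\ll(j+1)2^j$). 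When $V/2<|\gamma_2|\leq 2V$ we have $w(|\gamma_2|)\ll w(V)$ and $|\gamma_2|^{-1}\ll V^{-1}$, so this piece is $\ll V^{-1}w(V)\sum_{|\gamma_1-\gamma_2|\geq 1}|\gamma_1-\gamma_2|^{-1}$, the sum being over the stated $\gamma_2$; since $|\gamma_1-\gamma_2|\leq 3V$, decomposing the values of $|\gamma_1-\gamma_2|$ dyadically gives $\ll\log(V+2)$ per band and $\ll\log^2(V+3)$ in total, hence $\ll V^{-1}\log^{k+2}(V+3)$. Finally when $|\gamma_2|>2V$ we have $|\gamma_1-\gamma_2|>|\gamma_2|/2$, so the summand is $<2\,w(|\gamma_2|)/|\gamma_2|^2$ and the piece is $\ll V^{-1}\log^{k+1}(V+3)$ by dyadic summation. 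Adding the three pieces proves the claim.

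With $\Sigma(\gamma_1)$ in hand, the outer sum is
\[
\sum_{\substack{|\gamma_1|,|\gamma_2|\geq A\\ |\gamma_1-\gamma_2|\geq 1}}\frac{w(|\gamma_1|)w(|\gamma_2|)}{|\gamma_1||\gamma_2||\gamma_1-\gamma_2|}\ll\sum_{|\gamma_1|\geq A}\frac{w(|\gamma_1|)}{|\gamma_1|}\cdot\frac{\log^{k+2}(|\gamma_1|+3)}{|\gamma_1|}=\sum_{|\gamma_1|\geq A}\frac{\log^{2k+2}(|\gamma_1|+3)}{|\gamma_1|^2},
\]
and one last dyadic decomposition gives $\sum_{|\gamma_1|\geq A}\log^{2k+2}(|\gamma_1|+3)/|\gamma_1|^2\ll\sum_{j\geq\log_2 A}(j+1)^{2k+3}2^{-j}\ll (\log(A+3))^{2k+3}/A$, using $\sum_{j\geq J}(j+1)^m2^{-j}\ll_m(J+1)^m2^{-J}$ and $A\asymp A+1$ for $A\geq 1$. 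This is the desired bound.

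I expect the main obstacle to be the third range $|\gamma_2|>2|\gamma_1|$ in the inner estimate: the crude bound $|\gamma_2|^{-1}<2/V$ there would leave $\sum_{\gamma_2}w(|\gamma_2|)/|\gamma_2|$, which \emph{diverges}, so one is forced to retain the full decay $|\gamma_1-\gamma_2|^{-1}\asymp|\gamma_2|^{-1}$ coming from $\gamma_2$ lying far above $\gamma_1$. This, together with the genuine extra $\log$ lost in the near–diagonal band of the second range (where $|\gamma_1-\gamma_2|$ runs over $\asymp\log V$ dyadic scales, each contributing $\asymp\log V$), is exactly what produces the power $2k+3$ rather than $2k+2$; getting these two interlocking $\log$ losses bookkept correctly — rather than the convergence itself — is the delicate part.
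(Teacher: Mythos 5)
Your proof is correct and follows essentially the paper's argument: both proofs split according to whether the two ordinates are comparable or one dominates the other, invoke the zero-counting estimates of Lemma~\ref{NTchi}, and finish with the same dyadic bound $\sum_{|\gamma|\ge A}\log^{2k+2}(|\gamma|+3)/|\gamma|^{2}\ll(\log(A+3))^{2k+3}/(A+1)$. The only cosmetic difference is that the paper first symmetrizes to $|\gamma_2|\ge|\gamma_1|$ and absorbs both logarithmic weights into the larger ordinate, which eliminates your third range $|\gamma_2|>2|\gamma_1|$ and shortens the bookkeeping, but the underlying decomposition and inputs are the same.
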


\begin{proof}
The sum in question is at most twice the sum 
of terms with $|\g_2| \ge |\g_1|$, which is
$$
\ll \sum_{|\g_2|\ge A}
\frac{{\log}^{2k} (|\g_2|+3)}{{|\g_2|}} \bigg(\frac{1}{|\g_2|}
\sum_{|\g_1| < \frac{|\g_2|}{2}} \frac{1}{{|\g_1|}} +
\frac{1}{{|\g_2|}} \sum_{\substack{\frac{|\g_2|}{2} \leq |\g_1| \leq
  |\g_2| \\ |\g_2-\g_1| \ge 1}} \frac{1}{|\g_2-\g_1|} \bigg).
$$
By Lemma \ref{NTchi} (1), the two sums over $\g_1$ are $O(\log^2 (|\g_2|+3))$.  A further
application of Lemma \ref{NTchi} (1) completes the proof.
\end{proof}

We conclude this section with
a truncated version of the Perron formula for $G(x,u,v;\chi)$.

\begin{lem}\label{Perron} Uniformly for $x \leq T \leq 2x^2$,
  $x \ge 2$, $u\ge 0$ and $v\ge 0$, we have
\begin{equation} \label{Ginterror}
G(x,u,v;\chi) = \frac{1}{2\pi i} \int_{c-iT}^{c+iT} F(s+u,\chi)F(s+v,\chi)
\frac{x^s}{s} ds + O(\log^3 x),
\end{equation}
where $c=1+\frac{1}{\log x}$.
\end{lem}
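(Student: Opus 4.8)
The plan is to expand $F(s+u,\chi)F(s+v,\chi)$ into a Dirichlet series and apply the effective truncated Perron formula term by term. For $\Re s = c > 1$ and $u,v\ge 0$ the series $F(s+u,\chi)=\sum_p \chi(p)(\log p)p^{-u}\,p^{-s}$ converges absolutely, and the product equals $\sum_{n\ge 1}a_n n^{-s}$, where $a_n=\sum_{p_1 p_2 = n}\chi(p_1 p_2)(\log p_1)(\log p_2)p_1^{-u}p_2^{-v}$, the sum being over ordered pairs of primes with product $n$; in particular $\sum_{n\le x}a_n = G(x,u,v;\chi)$. I will use two bounds, both uniform in $u,v\ge 0$: first, $|a_n|\le\sum_{p_1 p_2=n}(\log p_1)(\log p_2)\ll\log^2 n$ (with $a_n=0$ unless $n$ is a product of two primes); second, on the line $\Re s = c$,
$$\sum_{n\ge 1}\frac{|a_n|}{n^c}\le\Bigl(\sum_p\frac{\log p}{p^c}\Bigr)^{2}\le\Bigl(\sum_{n\ge1}\frac{\Lambda(n)}{n^c}\Bigr)^{2}=\Bigl(-\frac{\zeta'}{\zeta}(c)\Bigr)^{2}\ll\Bigl(\frac{1}{c-1}\Bigr)^{2}=\log^2 x$$
by Lemma \ref{NTchi}(4). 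The absolute convergence on $\Re s=c$ also justifies interchanging $\sum_n$ with the finite integral appearing in \eqref{Ginterror}.

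Next I would invoke the classical Perron estimate: for $y>0$ with $y\ne1$, and $c>0$, $\bigl|\frac{1}{2\pi i}\int_{c-iT}^{c+iT}\frac{y^s}{s}\,ds-\delta_0(y)\bigr|\ll y^c\min\bigl(1,\frac{1}{T|\log y|}\bigr)$, where $\delta_0(y)=1$ if $y>1$ and $\delta_0(y)=0$ if $y<1$, together with $\bigl|\frac{1}{2\pi i}\int_{c-iT}^{c+iT}\frac{ds}{s}\bigr|\ll c/T$ when $y=1$. Taking $y=x/n$, multiplying by $|a_n|$, and summing gives
$$\Bigl|G(x,u,v;\chi)-\frac{1}{2\pi i}\int_{c-iT}^{c+iT}F(s+u,\chi)F(s+v,\chi)\frac{x^s}{s}\,ds\Bigr|\ll\sum_{n\ne x}|a_n|\Bigl(\frac{x}{n}\Bigr)^{c}\min\Bigl(1,\frac{1}{T|\log(x/n)|}\Bigr)+\frac{c}{T}|a_x|.$$
Here $x^c=ex$ since $x^{1/\log x}=e$, so the $y=1$ term is $\ll x^{-1}\log^2 x\ll1$. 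For $n\le x/2$ or $n\ge2x$ one has $|\log(x/n)|\gg1$, whence this part is $\ll\frac{x^c}{T}\sum_n|a_n|n^{-c}\ll\frac{x\log^2 x}{T}\le\log^2 x$ using $T\ge x$. For $x/2<n<2x$ with $n\ne x$, one has $(x/n)^c\asymp1$ and $|\log(x/n)|\asymp|x-n|/x$, so with $|a_n|\ll\log^2 x$ this part is
$$\ll\log^2 x\sum_{x/2<n<2x}\min\Bigl(1,\frac{x}{T|x-n|}\Bigr)\ll\log^2 x\,\log(2T)\ll\log^3 x,$$
since the integers with $|x-n|\le x/T$ number $O(1)$ (as $T\ge x$) and those with $|x-n|>x/T$ contribute a harmonic sum of length $\le 2T$, while $\log T\ll\log x$ because $T\le2x^2$. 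Adding the three ranges yields the error $O(\log^3 x)$ of \eqref{Ginterror}.

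The only mildly delicate point is the range $n\approx x$: there the cancellation in $a_n$ is unavailable and one must fall back on $|a_n|\ll\log^2 n$, while the count of integers within a fixed multiplicative window of $x$ contributes an extra factor $\log x$; it is precisely this that upgrades the naively expected $\log^2 x$ to $\log^3 x$, and it is essentially the only obstacle. All the estimates above hold uniformly in $u,v\ge0$ because the factors $p_i^{-u},p_i^{-v}\le1$ are simply discarded at the outset, so the stated uniformity is automatic.
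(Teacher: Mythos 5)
Your proof is correct and takes essentially the same route as the paper: expand $F(s+u,\chi)F(s+v,\chi)$ as a Dirichlet series with coefficients bounded trivially by $\log^2 n$, then apply the standard truncated Perron formula term by term (the paper simply cites Davenport \S 17, (3) and (5), for exactly the computation you wrote out, including the near-diagonal range $n\asymp x$ that produces the extra factor of $\log x$). One harmless quibble: at $y=1$ the truncated integral equals $\tfrac12+O(c/T)$ rather than $O(c/T)$, but either way the $n=x$ term contributes only $O(\log^2 x)$, so the stated error $O(\log^3 x)$ is unaffected.
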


\begin{proof}
For $\Re s > 1$, we have
$$
F(s+u,\chi)F(s+v,\chi) = \sum_{n=1}^\infty f(n) n^{-s}, \qquad
f(n)=\sum_{p_1p_2=n} \frac{\chi(p_1 p_2) \log p_1 \log p_2}{p_1^{u} p_2^v}.
$$
Using the trivial estimate
$|f(n)| \le \log^2 n$
and a standard argument \cite[\S 17, (3) and (5)]{Da}, we
obtain the desired bounds.
\end{proof}

%
%
%
\section{Outline of the proof of Theorem \ref{DeltaDelta2}}\label{sec:outline}
%
%
%

Throughout the remainder of this paper, fix $q$, assume ERH$_q$
and that $L(\frac12,\chi) \ne 0$ for each $\chi\in C(q)$.  Let
$$
\eps = \frac{1}{100}.
$$
We next define a function $T(x)$ as follows.  For each positive
integer $n$, let $T_n$ be an admissible value of $T$ 
satisfying $\exp (2^{n+1}) \le T_n \le
\exp(2^{n+1})+1$ and set $T(x)=T_n$ for $\exp(2^n) < x \le \exp
(2^{n+1})$.  In particular, we have 
$$
x \le T(x) \le 2x^2 \qquad (x\ge e^2).
$$

Our first task is to express the double integrals in \eqref{DeltaG}
in terms of sums over zeros of $L(s,\chi)$.  This is proved in
Section \ref{sec:analytic}.

\begin{lem}\label{analytic}
Let $\chi\in C(q)$ and let $T=T(x)$.  Then 
\begin{multline*}
 x^{-1/2} \int_0^\infty \int_0^\infty G(x,u,v;\chi)\, du\, dv \\
= 2\int_{0}^{2\eps} \!\!\! \int_{0}^{2\eps} \sum_{|\g|\le T} 
\frac{F(\frac{1}{2}+u-v+i\g,\chi)x^{-v+i\g}}{\frac{1}{2}-v+i\g}
du\, dv + \frac{A(\chi) \log\log x + \Sigma_1(x;\chi)+O(1)}{\log x},
\end{multline*}
where
$\int_{1}^{Y} |\Sigma_1 (e^y;\chi)|^2 dy = O(Y)$.
\end{lem}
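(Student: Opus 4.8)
The plan is to start from the truncated Perron formula (Lemma~\ref{Perron}) and carry out a double contour shift; the delicate points will be the collision of poles along the diagonal $u=v$ and the extraction of $\log\log x$ from the $\tfrac{\zeta'}{\zeta}(2\,\cdot\,)$ term furnished by Lemma~\ref{FsHs}. First I would split $\int_0^\infty\int_0^\infty=\int_Q+\int_{[0,\infty)^2\setminus Q}$ with $Q=[0,2\eps]^2$. Since the sum \eqref{Gsum} is finite, Fubini gives the complement exactly as
\[
\int_{[0,\infty)^2\setminus Q} G(x,u,v;\chi)\,du\,dv = 2\sum_{p_1 p_2\le x}\chi(p_1 p_2)p_1^{-2\eps} - \sum_{p_1 p_2\le x}\chi(p_1 p_2)(p_1 p_2)^{-2\eps}.
\]
Writing $\theta(y;\chi)=\psi(y;\chi)-A(\chi)\sqrt y+O(y^{1/4}\log^2 y)$ under ERH$_q$ and using partial summation, each inner sum decomposes into a secular term of size $O(x^{1/2}/\log x)$ and an oscillating remainder of size $O(x^{1/2-2\eps}\log^2 x)$; after $\times x^{-1/2}$ these feed into $\tfrac{\Sigma_1+O(1)}{\log x}$.

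On $Q$, apply Lemma~\ref{Perron} with $T=T(x)$ and integrate in $u,v$; the error $O(\log^3 x)$ integrates over the bounded box to $O(\log^3 x)$, negligible after $\times x^{-1/2}$. I would then shift the $s$--contour from $\Re s=c=1+\tfrac1{\log x}$ leftward to a fixed line $\Re s=\kappa$ with $\tfrac13<\kappa<\tfrac12-2\eps$ (with the constant $c$ of Lemma~\ref{FsHs} chosen below $\kappa$): this keeps $\Re(s+u),\Re(s+v)$ in the half-plane where $H(s+u,\chi),H(s+v,\chi)$ are analytic, avoids the zeros of $\zeta$ (at $\Re=\tfrac14$), and — essentially — does not cross $s=0$, where $F(s,\chi)$ diverges. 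On $\Im s=\pm T$ and on $\Re s=\kappa$ one has $|F(s+u,\chi)F(s+v,\chi)|\ll\log^4(|\Im s|+2)$ by Lemmas~\ref{NTchi} and~\ref{zeta}, so those contour integrals are $\ll\log^3 x$ and $\ll x^{\kappa}\log^5 x$, both negligible after $\times x^{-1/2}$ since $\kappa<\tfrac12$. Shifting past $\Re s=\tfrac12$ one collects (i) for each zero $\tfrac12+i\g$ of $L(s,\chi)$ with $|\g|\le T$, the residues of $F(s+u,\chi)$ at $s=\tfrac12-u+i\g$ and of $F(s+v,\chi)$ at $s=\tfrac12-v+i\g$, and (ii) when $A(\chi)=1$, the residues at $s=\tfrac12-u$ and $s=\tfrac12-v$ arising from the pole of $\zeta$ at $1$ inside $\tfrac{\zeta'}{\zeta}(2(s+u))$ and $\tfrac{\zeta'}{\zeta}(2(s+v))$.

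The $\g$--residue from $F(s+v,\chi)$ is, up to a fixed constant, $m(\g)F(\tfrac12+u-v+i\g,\chi)\,x^{1/2-v+i\g}/(\tfrac12-v+i\g)$, precisely the summand of the asserted main term; the $\g$--residue from $F(s+u,\chi)$ is the same with $u,v$ exchanged. Both are singular of order one across $u=v$ (the pole of $-\tfrac{L'}{L}(\tfrac12+w+i\g,\chi)$ at $w=0$), but with opposite residues in $w$, so their sum is regular there, and by the $u\leftrightarrow v$ symmetry of the integral over $Q$ the total equals twice the integral of one family interpreted as a Cauchy principal value — this produces the factor $2$ in the statement. Making this cancellation rigorous (equivalently, that $\int_Q|F(\tfrac12+u-v+i\g,\chi)|$ diverges only logarithmically in isolation while the principal value is finite and agrees with the regular combined residue) is the main technical obstacle. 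For (ii), Lemma~\ref{FsHs} and $\tfrac{\zeta'}{\zeta}(1+2w)=-\tfrac1{2w}+O(1)$ yield $F(\tfrac12+w,\chi)=-\tfrac{A(\chi)}{2w}+R(w)$ with $R$ bounded on $[-2\eps,2\eps]$ (here $L(\tfrac12,\chi)\ne0$ is used). Hence the $s=\tfrac12-u$ residue equals $\tfrac{A(\chi)}{4(v-u)}\cdot\tfrac{x^{1/2-u}}{\frac12-u}$ up to $O(1)\cdot\tfrac{x^{1/2-u}}{\frac12-u}$, and symmetrically in $u,v$; adding the two, the $\tfrac1{v-u}$ singularity again cancels, leaving $\tfrac{A(\chi)}4\cdot\tfrac{\phi(u)-\phi(v)}{v-u}$ with $\phi(t)=x^{-t}/(\tfrac12-t)$, a quantity $\ll\log x\cdot x^{-\min(u,v)}$. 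Multiplying by $x^{-1/2}$ and integrating over $Q$, the mass concentrates at scale $1/\log x$ near the origin; the substitution $u=a/\log x,\ v=b/\log x$ reduces this to $\tfrac{A(\chi)}{2\log x}\int_0^{2\eps\log x}\int_0^{2\eps\log x}\tfrac{e^{-a}-e^{-b}}{b-a}\,da\,db+O(\tfrac1{\log x})$, whose truncated evaluation is $\log\log x+O(1)$, accounting for the $\tfrac{A(\chi)\log\log x}{\log x}$; the remainder $\int_Q R(v-u)\phi(u)\,du\,dv\ll\int_0^{2\eps}\phi(u)\,du\ll\tfrac1{\log x}$ goes into $\tfrac{O(1)}{\log x}$.

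Finally, gathering the negligible contributions, the explicit $\tfrac{A(\chi)\log\log x}{\log x}$, and the bounded pieces into $\tfrac{A(\chi)\log\log x+O(1)}{\log x}$, the function $\Sigma_1(x;\chi)$ is defined as the surviving oscillating remainder (chiefly the oscillating part of the complement region and the off-diagonal parts of the zero-residues written out via Lemma~\ref{NTchi}(3)). Its mean-square bound $\int_1^Y|\Sigma_1(e^y;\chi)|^2\,dy=O(Y)$ is then obtained by expanding $\Sigma_1$ into sums over pairs of zeros and invoking the mean-value and orthogonality estimates of Lemmas~\ref{NTchi} and~\ref{sumg1g2}. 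I expect the two hardest steps to be (a) turning the diagonal cancellation in (i) into a clean principal-value identity, and (b) the truncated Laplace-type evaluation in (ii) that pins down the coefficient of $\log\log x$.
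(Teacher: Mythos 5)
Your treatment of the box $Q=[0,2\eps]^2$ is essentially the paper's argument: the contour shift past $\Re s=\tfrac12$ with the zero-residues combined by the $u\leftrightarrow v$ symmetry (giving the factor $2$, with the diagonal handled as a principal value and the genuine cancellation deferred), and the extraction of $\tfrac{A(\chi)\log\log x}{\log x}$ from the residues at $s=\tfrac12-u$, $s=\tfrac12-v$ via $F(\tfrac12+w,\chi)=-\tfrac{1}{2w}+O(1)$, exactly as in the paper's evaluation of $I_4$ and \eqref{I41}. That part is sound.

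The gap is in your treatment of the complement $[0,\infty)^2\setminus Q$. Your identity for it is correct, but the claim that each resulting sum splits into a secular term $O(x^{1/2}/\log x)$ plus an oscillating remainder $O(x^{1/2-2\eps}\log^2 x)$ is false for the unbalanced sums $\sum_{p_1p_2\le x}\chi(p_1p_2)p_1^{-2\eps}$ (only the balanced sum $\sum_{p_1p_2\le x}\chi(p_1p_2)(p_1p_2)^{-2\eps}$ admits such a pointwise saving, by partial summation in $n=p_1p_2$). Writing the unbalanced sum as $\sum_{p_1\le x}\chi(p_1)p_1^{-2\eps}\,\Pi(x/p_1;\chi)$ and inserting the ERH bound for the inner oscillation only gives $O(x^{1-2\eps}\log x)$ pointwise; under ERH there is no pointwise power-saving bound, and expanding the inner error over zeros shows the oscillating part is typically of size $\asymp x^{1/2}/\log x$, i.e.\ of exactly the order the lemma is tracking. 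So this contribution cannot be discarded: it must be absorbed into $\Sigma_1$, and then the substantive assertion of the lemma for this region — the mean-square bound $\int_1^Y|\Sigma_1(e^y;\chi)|^2\,dy=O(Y)$ — still has to be proved, which your plan does not do (your closing sentence gestures at "sums over pairs of zeros", but to get there you need an explicit-formula/contour representation of these unbalanced sums, which is precisely the analytic work you were trying to avoid; note also that $\int_0^\infty\int_0^\infty G\,du\,dv=\sum_{p_1p_2\le x}\chi(p_1p_2)$ is, up to normalization, the very quantity under study, so elementary rearrangement into such prime sums cannot by itself suffice). The paper handles exactly this region by splitting the complement further: $\max(u,v)\ge\log x$ is trivial, $\min(u,v)\ge\eps$ gives a power saving via the contour estimate of Lemma~\ref{Gbounds}(1), and the remaining strips $\{\min(u,v)\le\eps,\ \max(u,v)\ge2\eps\}$ are treated by the shifted-contour formula of Lemma~\ref{Gbounds}(2); the resulting double integral of the zero sum is the definition of $\Sigma_1$, and its mean square is bounded by expanding over pairs of zeros and invoking Lemmas~\ref{NTchi} and~\ref{sumg1g2}. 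To repair your proof you would have to replace your complement estimate by this (or an equivalent) analysis.
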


The aggregate of terms $A(\chi)\log\log x/\log x$ account for the 
bias for products of two primes.  As with the Chebyshev bias for primes,
these terms arise from poles of $F(s)$ at $s=\frac12$ when $A(\chi)=1$
(see Lemma \ref{FsHs}) and correspond to the contribution to $F(s)$
from squares of primes.
The double integral on the right side in Lemma \ref{analytic} is
complicated to analyze.  In Section \ref{sec:doubleint} we prove the following.

\begin{lem}\label{doubleint}  Let $\chi\in C(q)$.
Let $n$ be a positive integer, $2^n < \log x \le 2^{n+1}$ and  $T=T(x)$.  Then
\begin{align*}
2 \int_0^{2 \eps} \int_0^{2 \eps} &\sum_{|\g| \leq T} 
  \frac{ F(\frac{1}{2}+u-v+i \g,\chi) x^{-v+i \g}}{\frac{1}{2}-v+i \g} 
  du\ dv 
= \frac{\Sigma_2(x;\chi)}{\log x}\\
& + 2\sum_{\substack{|\g| \leq T \\
\g \text{ distinct}}} m^2(\g) x^{i\g} (\frac{1}{2}+i\g) 
\int_0^{2 \eps-2^{-n}} \frac{ x^{-v}}{\frac{1}{2}-v+i \g} 
\int_{v+2^{-n}}^{2 \eps} \frac{du}{(u-v)(\frac{1}{2}-u+i \g)} dv,
\end{align*}
where $\int_{1}^{Y} |\Sigma_2(e^y;\chi)|^2 dy = o(Y\log^2 Y)$.
\end{lem}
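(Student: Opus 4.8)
The plan is to insert the decomposition of Lemma~\ref{FsHs},
$$
F(\tfrac12+u-v+i\g,\chi)=-\tfrac{L'}{L}(\tfrac12+u-v+i\g,\chi)+A(\chi)\tfrac{\zeta'}{\zeta}\bigl(1+2(u-v)+2i\g\bigr)+H(\tfrac12+u-v+i\g,\chi),
$$
legitimate because $\Re(\tfrac12+u-v)\in[\tfrac12-2\eps,\tfrac12+2\eps]\subset(\tfrac13,\infty)$, and to follow the three pieces separately. The bounded analytic piece $H$, and the $\zeta'/\zeta$ piece (which by Lemma~\ref{NTchi}(3)--(5) is $O(\log(|\g|+2))$ uniformly for $u,v\in[0,2\eps]$, with no singularity at $u=v$ since $L(\tfrac12,\chi)\ne 0$ forces every $\g\ne 0$), will contribute only to $\Sigma_2$: after integrating $x^{-v}$ in $v$, which costs a factor $\ll 1/\log x$, each is $\log x$ times a $u,v$-integral of a sum $\sum_{|\g|\le T}c_\g(u,v)\,x^{i\g}$ with $|c_\g|\ll\log(|\g|+2)/(|\g|+1)$, to be handled by the $L^2$-estimate below. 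For the $-L'/L$ piece I would use Lemma~\ref{NTchi}(3) to write $-\tfrac{L'}{L}(\tfrac12+u-v+i\g,\chi)=-\sum_{|\g_1-\g|<1}\bigl((u-v)+i(\g-\g_1)\bigr)^{-1}+O(\log(|\g|+2))$, the $O(\log)$ term again being of $\Sigma_2$-type, and isolate the diagonal zero $\g_1=\g$, which (with its multiplicity) contributes $-m(\g)/(u-v)$.

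Since the outer sum $\sum_{|\g|\le T}$ also counts zeros with multiplicity, combining it with the diagonal term yields $-\sum_{\g\ \mathrm{dist}} m^2(\g)(u-v)^{-1}x^{-v+i\g}(\tfrac12-v+i\g)^{-1}$, which is the source of both the stated main term and the bias-carrying $\log\log x$ scale. Because $(u-v)^{-1}$ is not integrable across $u=v$, one works with the symmetrized (hence genuinely convergent) form of the double integral --- equivalently, with the pair of residues at $s=\rho-v$ and $s=\rho-u$ in the Perron integral of Lemma~\ref{Perron}, which also explains the appearance of the factor $(\tfrac12-u+i\g)^{-1}=(\rho-u)^{-1}$ --- and restricts to the main domain $\{0\le v\le 2\eps-2^{-n},\ v+2^{-n}\le u\le 2\eps\}$. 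On the strip $\{|u-v|<2^{-n}\}$ the $u\leftrightarrow v$ cancellation of the $(u-v)^{-1}$ pole leaves a contribution of size $\ll 1/\log x$ with a bounded coefficient in $\g$, hence of $\Sigma_2$-type; the region $\{u<v\}$ is treated via the functional equation (returning $F$ to $\Re s>\tfrac12$ and applying Lemma~\ref{NTchi}(3)) and is likewise of $\Sigma_2$-type, as are the terms with $2^{-n}\le|\g_1-\g|<1$ (whose $u$-integral is $\ll|\log|\g-\g_1||$). On the main domain $1/(u-v)\le 2^n\ll\log x$, and a sequence of elementary partial-fraction identities, using $(\rho-u)^{-1}(\rho-v)^{-1}=(u-v)^{-1}\bigl((\rho-u)^{-1}-(\rho-v)^{-1}\bigr)$ and the near-diagonal estimate $\bigl|((u-v)+i(\g-\g_1))^{-1}-(u-v)^{-1}\bigr|\ll|\g-\g_1|/(u-v)^2$, rearranges the surviving integral into exactly the stated shape $2\sum_{|\g|\le T,\ \g\ \mathrm{dist}}m^2(\g)x^{i\g}(\tfrac12+i\g)\int_0^{2\eps-2^{-n}}\tfrac{x^{-v}}{\frac12-v+i\g}\int_{v+2^{-n}}^{2\eps}\tfrac{du}{(u-v)(\frac12-u+i\g)}\,dv$, the difference being of $\Sigma_2$-type. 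The one thing to check carefully here is that the $\log\log x$ scale --- arising from $\int_{2^{-n}}^{2\eps}du/u\asymp\log\log x$ --- is retained entirely inside this main term and not in any remainder.

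It remains to prove $\int_1^Y|\Sigma_2(e^y;\chi)|^2\,dy=o(Y\log^2 Y)$. Writing $\Sigma_2(e^y;\chi)$ (with $x=e^y$) as $\log x$ times finitely many $u,v$-integrals of sums $\sum_{|\g|\le T(x)}c_\g(u,v)x^{i\g}$, and for the non-diagonal pieces double sums over pairs of zeros, I would pull the $u,v$-integrals out by Cauchy--Schwarz and bound each $\int_1^Y\bigl|\sum_{|\g|\le T(e^y)}c_\g e^{iy\g}\bigr|^2\,dy$. First replace the moving truncation $T(e^y)$ by $T(e^Y)\ge e^Y$ and estimate the tail (trivial, since $\sum_{|\g|>T(e^Y)}|c_\g|$ is tiny). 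Then expand the square: the diagonal $\g_1=\g_2$ gives $Y\sum_{|\g|\le T(e^Y)}|c_\g|^2\ll Y\sum_\g \log^2(|\g|+2)/|\g|^2\ll Y$ by Lemma~\ref{NTchi}(1); the off-diagonal part with $|\g_1-\g_2|\ge 1$ is $\ll\sum\frac{|c_{\g_1}||c_{\g_2}|}{|\g_1-\g_2|}\ll 1$ by Lemma~\ref{sumg1g2} with $k=1$; and the part with $|\g_1-\g_2|<1$ is bounded using $\int_1^Y e^{iy(\g_1-\g_2)}\,dy\ll\min(Y,|\g_1-\g_2|^{-1})$ together with Lemma~\ref{NTchi}(2) for the number of zeros in unit intervals, again giving $\ll Y$. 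For the non-diagonal double sums one first performs the $u$-integral, turning $-\sum_{\g_1\ne\g,\,|\g_1-\g|<1}((u-v)+i(\g-\g_1))^{-1}$ into a difference of logarithms of $L$-functions, and estimates its second moment the same way; the essential point is that its size is controlled by cancellation in $\g$, not by a pointwise bound (which could be large near clustered zeros). The thin strip near $u=v$ is controlled using Lemma~\ref{zeta} at the admissible height $T=T(x)$. All these bounds being $O\bigl(Y(\log\log Y)^{O(1)}\bigr)=o(Y\log^2 Y)$, the lemma follows.

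I expect the genuine difficulty to be twofold: first, the exact rearrangement of the second paragraph --- verifying that the diagonal and near-diagonal ($|\g_1-\g|<2^{-n}$) contributions on the main domain collapse to precisely the stated integral with only $\ll 1/\log x$ per-$\g$ remainders, so that no $\log\log x$-scale oscillating term leaks into $\Sigma_2$ (such a term would make $\int_1^Y|\Sigma_2(e^y;\chi)|^2\,dy\asymp Y\log^2 Y$, violating the claim); and second, keeping the second-moment estimates uniform in the $x$-dependent truncation $T(x)$, which is exactly where the $\log$-weighted ($k=1$) form of Lemma~\ref{sumg1g2} is needed to absorb the $\log(|\g|+2)$ factors coming from Lemma~\ref{NTchi}(3) and from the $\zeta'/\zeta$ term.
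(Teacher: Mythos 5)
Your skeleton is essentially the paper's own: you expand $F(\frac12+u-v+i\g,\chi)$ through Lemma \ref{NTchi}(3) into the diagonal pole, the near-zero cluster sum $\sum_{0<|\g'-\g|\le 1}((u-v)+i(\g-\g'))^{-1}$, and an $O(\log(|\g|+3))$ remainder; you keep the diagonal piece on $u\ge v+2^{-n}$, kill the strip $|u-v|\le 2^{-n}$ by the $u\leftrightarrow v$ antisymmetry, note that trading $(\frac12-v+i\g)^{-1}$ for $(\frac12+i\g)\big((\frac12-v+i\g)(\frac12-u+i\g)\big)^{-1}$ costs only $O(1/\log x)$ per zero with summable weights, and run the second moment over pairs of zeros with Lemmas \ref{NTchi} and \ref{sumg1g2} — all as in the paper. (Two execution points: the residue of $F$ at $u=v$ is $-m(\g)$, so the signs need tracking; and the $1/\log x$ gained from the $v$-integration and the $1/|\g_1-\g_2|$ gained from the $y$-oscillation must be exploited jointly, i.e.\ integrate in $y$ inside the quadruple $(u_j,v_j)$-integral or use a weighted Cauchy--Schwarz — the unweighted Cauchy--Schwarz in $(u,v)$ you describe already loses the diagonal-pair contribution.)

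The genuine gap is the cluster term, i.e.\ zeros $\g'\ne\g$ with $|\g'-\g|<2^{-n}\asymp 1/\log x$ (the paper's $\Sigma_{2,1}$). There your blanket coefficient bound $|c_\g|\ll\log(|\g|+2)/(1+|\g|)$ is false, and your fallback — perform the $u$-integral and ``estimate its second moment the same way'' — produces per-zero coefficients of size $\log(2/|\g-\g'|)$, which cannot be bounded by anything: under ERH$_q$ and $L(\frac12,\chi)\ne 0$ there is no lower bound on gaps between zeros, so sums such as $\sum_\g \g^{-2}\log^{O(1)}(|\g|+3)\log^2(2/|\g-\g'|)$ are not known to converge, and your asserted final estimate $O\bigl(Y(\log\log Y)^{O(1)}\bigr)$ is unobtainable (the paper's proof does not achieve it either). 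What is actually needed is the content of Lemma \ref{Qint}: the $u\leftrightarrow v$ symmetrization (equivalently, doing the $u$-integral exactly and bounding the resulting logarithm) caps each such coefficient by $\log\min(2y,2/|\g-\g'|)\ll\log y$; this alone gives only the uniform bound $O(n^2 2^{-n})$ per dyadic block $2^n<y\le 2^{n+1}$, i.e.\ $O(Y\log^2 Y)$ after the $(\log x)^2$ factor, which does not prove the lemma. The stated $o(Y\log^2 Y)$ is then extracted by a dominated-convergence argument over the zeros: for each fixed close pair, $\log\min(2y,2/|\g-\g'|)$ saturates at the constant $\log(2/|\g-\g'|)$, so the block sums are $o(n^2 2^{-n})$. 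This capping-plus-limit device is the one idea missing from your plan, and it is exactly why the lemma claims only $o(Y\log^2 Y)$ rather than the stronger bound you assert.
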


The terms on the right in Lemma \ref{doubleint} with small $|\g|$ 
will give the main term, and terms with larger $|\g|$ are considered 
as error terms.
The next lemma is proved in Section \ref{sec:TT0}.

\begin{lem}\label{TT0}
Let $\chi\in C(q)$.
Let $n$  be a positive integer, $2^n < \log x \le 2^{n+1}$,  $T=T(x)$
and $2\le T_0 \le T$.  Then
\begin{align*}
&2\sum_{\substack{|\g| \leq T \\ \g \text{ distinct}}} m^2(\g) x^{i\g} 
(\frac{1}{2}+i\g) \int_0^{2 \eps-2^{-n}} \frac{ x^{-v}}{\frac{1}{2}-v+i \g}
 \int_{v+2^{-n}}^{2 \eps} \frac{du}{(u-v)(\frac{1}{2}-u+i \g)} dv \\
&\qquad = \frac{2\log\log x}{\log x} \sum_{\substack{|\gamma|\le T_0 \\ 
\g \text{ distinct}}}
 \frac{m^2(\gamma) x^{i\gamma}}{1/2+i\g} + O\pfrac{\log^3 T_0}{\log x} + 
\frac{\Sigma_3(x,T_0;\chi)}{\log x},
\end{align*}
where 
$$
\frac{1}{Y}\int_1^Y |\Sigma_3(e^y,T_0;\chi)|^2\, dy 
\ll \frac{\log^5 T_0}{T_0} \log^2 Y.
$$
\end{lem}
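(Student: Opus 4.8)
The strategy is to evaluate, for each fixed zero $\tfrac12+i\g$, the double integral attached to $\g$ almost explicitly, to read off from it the contribution $\log\log x\cdot m^2(\g)x^{i\g}\big/\bigl((\tfrac12+i\g)\log x\bigr)$, and then to truncate the sum over zeros at $|\g|=T_0$. The range $|\g|\le T_0$ of that contribution is the asserted main term; all secondary error terms with $|\g|\le T_0$ are bounded absolutely and produce the error $O(\log^3T_0/\log x)$; everything with $T_0<|\g|\le T$ is collected into $\Sigma_3(x,T_0;\chi)/\log x$ and controlled in mean square.

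\emph{The integral for one $\g$.} Put $c=\tfrac12-v+i\g$. Since $u-v$, $\tfrac12-u+i\g$, $\tfrac12-v-2^{-n}+i\g$ all have positive real part on the domain of integration, the partial fraction $\frac1{(u-v)(\tfrac12-u+i\g)}=\frac1c\bigl(\frac1{u-v}+\frac1{\tfrac12-u+i\g}\bigr)$ gives for the inner integral the value
\[
\frac1c\Bigl(\log\bigl((2\eps-v)2^n\bigr)+\log\frac{\tfrac12-v-2^{-n}+i\g}{\tfrac12-2\eps+i\g}\Bigr),
\]
and, as $2\eps-2^{-n}-v$ is a bounded nonnegative real, the second logarithm is $O((|\g|+1)^{-1})$. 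Because $2^n<\log x\le2^{n+1}$ we have $\log((2\eps-v)2^n)=n\log2+\log(2\eps-v)$ with $n\log2=\log\log x+O(1)$. Inserting this into the $v$-integral, replacing $(\tfrac12-v+i\g)^2$ by $(\tfrac12+i\g)^2$ at the expense of $\bigl|(\tfrac12-v+i\g)^{-2}-(\tfrac12+i\g)^{-2}\bigr|\ll v(|\g|+1)^{-3}$, splitting the $v$-range at $v=\eps$ (where $x^{-v}$ is exponentially small), and using $\int_0^\infty x^{-v}dv=(\log x)^{-1}$ and $\int_0^\infty vx^{-v}dv=(\log x)^{-2}$, I would obtain for the double integral $I(\g)$ attached to $\g$ that
\[
I(\g)=\frac{\log\log x}{(\tfrac12+i\g)^2\log x}+\frac{c_1(\g,x)}{(\tfrac12+i\g)^2\log x}+O\!\Bigl(\frac{\log\log x}{(|\g|+1)^3\log^2x}\Bigr)+O\!\Bigl(\frac1{(|\g|+1)^3\log x}\Bigr),
\]
where $|c_1(\g,x)|\ll1$. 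Multiplying by $2m^2(\g)x^{i\g}(\tfrac12+i\g)$: the first term becomes $\frac{2\log\log x}{\log x}\cdot\frac{m^2(\g)x^{i\g}}{\tfrac12+i\g}$; the second becomes a term of the same shape with the bounded coefficient $c_1(\g,x)$ in place of $\log\log x$; and the last two become $O\bigl(m^2(\g)(|\g|+1)^{-2}\log\log x\,\log^{-2}x\bigr)$ and $O\bigl(m^2(\g)(|\g|+1)^{-2}\log^{-1}x\bigr)$, which on summing over all distinct $|\g|\le T$ (the series $\sum_\g m^2(\g)(|\g|+1)^{-2}$ converging by Lemma \ref{NTchi}) contribute only $O(1/\log x)$.

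\emph{Truncation at $T_0$.} Summing over distinct $|\g|\le T$ and splitting at $T_0$: the head $|\g|\le T_0$ of the main term is the asserted main term. The head of the $c_1$-term is estimated absolutely; using $m^2(\g)\le m(\g)\log(q|\g|+2)$, Lemma \ref{NTchi}, and partial summation (with the inner sum over zeros counted with multiplicity),
\[
\sum_{\substack{|\g|\le T_0\\\g\text{ distinct}}}\frac{m^2(\g)}{|\g|+1}\ \ll\ \log(qT_0)\sum_{|\g|\le T_0}\frac1{|\g|+1}\ \ll\ \log^3T_0,
\]
so this head is $O(\log^3T_0/\log x)$, matching the stated error. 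Everything with $T_0<|\g|\le T$ --- the tail $2\log\log x\cdot(\log x)^{-1}\sum_{T_0<|\g|\le T}\frac{m^2(\g)x^{i\g}}{\tfrac12+i\g}$, the analogous tail with $c_1(\g,x)$ in place of $\log\log x$, and the two $O(1/\log x)$ remainders --- is taken to be $\Sigma_3(x,T_0;\chi)/\log x$.

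\emph{The mean-square bound, and the main difficulty.} Writing $x=e^y$, so $\log\log x=\log y\le\log Y$ on $[1,Y]$, it suffices to bound $\frac1Y\int_1^Y\bigl|\sum_{T_0<|\g|\le T(e^y)}\frac{m^2(\g)e^{iy\g}}{\tfrac12+i\g}\bigr|^2dy$ and the analogous integral with $\tfrac12+i\g$ replaced by $|\g|+1$. Expanding the square and integrating, the diagonal is $\ll\sum_{|\g|>T_0}\frac{m^4(\g)}{(|\g|+1)^2}\ll\log^4T_0/T_0$ (by $m^4(\g)\ll m(\g)\log^3(q|\g|+2)$, Lemma \ref{NTchi}, and partial summation); the off-diagonal terms with $0<|\g_1-\g_2|<1$, whose number for fixed $\g_1$ is $O(\log(q|\g_1|+2))$ by Lemma \ref{NTchi}(2), sum to $\ll\log^5T_0/T_0$ and give the dominant contribution; and the off-diagonal terms with $|\g_1-\g_2|\ge1$, for which $\bigl|\int e^{iy(\g_1-\g_2)}dy\bigr|\ll|\g_1-\g_2|^{-1}$, are controlled by Lemma \ref{sumg1g2} with $k=2$, the resulting $\log^7T_0/T_0$ being divided by $Y$ and being lower order once one uses that the integrand vanishes unless $y\gg\log T_0$, so that the effective integration range has length $\ll Y$ and $\log T_0\ll Y$. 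Collecting these and restoring the factor $(\log y)^2\le(\log Y)^2$ yields $\frac1Y\int_1^Y|\Sigma_3(e^y,T_0;\chi)|^2dy\ll(\log^5T_0/T_0)\log^2Y$. The main obstacle is exactly this last step: peeling the oscillating factor $x^{i\g}$ off every secondary error term cleanly enough to split it into the piece bounded absolutely ($|\g|\le T_0$) and the piece bounded in mean square ($T_0<|\g|\le T$), and managing the interplay between the $y$-dependent truncation $T(e^y)$ and the off-diagonal estimate of Lemma \ref{sumg1g2}, so that no more than a power of $\log Y$ is lost and the accumulated power of $\log T_0$ does not exceed $\log^5T_0$.
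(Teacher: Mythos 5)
Your plan follows the paper's proof in all essentials: evaluate the double integral attached to each zero after replacing $\frac12-v+i\g$ and $\frac12-u+i\g$ by $\frac12+i\g$, extract the main term $(\log\log x+O(1))/\bigl((\frac12+i\g)^2\log x\bigr)$, bound the head $|\g|\le T_0$ absolutely using $\sum_{|\g|\le T_0}m^2(\g)/(|\g|+1)\ll\log^3T_0$, and treat the tail in mean square by the diagonal/near-diagonal/off-diagonal splitting with Lemmas \ref{NTchi} and \ref{sumg1g2}. The one structural difference is that you evaluate the integral for \emph{all} zeros first and then mean-square the resulting zero sum with the factor $\log y$ pulled out, whereas the paper leaves the tail integrals unevaluated and does the $y$-integration inside the expanded square, so that the factor $e^{y(i(\g_1-\g_2)-v_1-v_2)}$ supplies the $1/|\g_1-\g_2|$ decay directly. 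Your route works, but only because your evaluated coefficient $c_1(\g,x)$ is in fact independent of $\g$ (all the $\g$-dependence sits in the two explicit remainders); you should say this, since for a genuinely $y$- and $\g$-dependent bounded coefficient one cannot compare the mean square of the oscillating sum with that of $\sum m^2(\g)e^{iy\g}/(\frac12+i\g)$, and ``replacing $\frac12+i\g$ by $|\g|+1$'' does not address this.

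The one step that does not deliver the stated bound as written is the off-diagonal estimate. Applying Lemma \ref{sumg1g2} with $k=2$ gives $\log^7T_0/T_0$ per dyadic block, and your rescue (``the integrand vanishes unless $y\gg\log T_0$, and $\log T_0\ll Y$'') is insufficient: after summing over $\ll\log Y$ blocks, multiplying by the $\log^2 Y$ you pulled out and dividing by $Y$, you would need $\log Y\,\log^2T_0\ll Y$, which does not follow from $\log T_0\ll Y$; it is fine for fixed $T_0$ as $Y\to\infty$ (hence for the application in Section 3), but not for the uniform inequality claimed in the lemma. The repair is the multiplicity conversion you already use on the diagonal: write $m^2(\g_1)m^2(\g_2)\le m(\g_1)m(\g_2)\log(|\g_1|+3)\log(|\g_2|+3)$, pass to the sum over zeros counted with multiplicity, and apply Lemma \ref{sumg1g2} with $k=1$, which gives $\log^5T_0/T_0$ outright, exactly as in the paper. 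A smaller bookkeeping point: the remainders you sum over zeros should be split at $T_0$ as well; their head belongs in the $O(\log^3T_0/\log x)$ error (an $O(1)$ piece of $\Sigma_3$ independent of $T_0$ would violate the stated mean-square bound for large $T_0$), while their tail is $\ll\log^3T_0/(T_0\log x)$ and harmless inside $\Sigma_3$.
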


Combining Lemmas \ref{analytic}, \ref{doubleint} and \ref{TT0}
with \eqref{DeltaG} yields (for fixed, large $T_0$)
\begin{multline*}
\Delta_2(x;q,a,b) = \frac{\sqrt{x}}{2\phi(q)} \sum_{\chi\in C(q)}
\( \overline{\chi}(a)-\overline{\chi}(b) \) \Bigg[
\frac{\log\log x}{\log x} \Bigg( A(\chi)+2 \sum_{\substack{|\g|\le T_0
\\ \g \text{ distinct}}} \frac{m^2(\g) x^{i\g}}{1/2+i\g}\Bigg) \\
+ \frac{\Sigma_1(x;\chi)+\Sigma_2(x;\chi)+\Sigma_3(x,T_0;\chi)
+O(\log^3 T_0)}{\log x}
\Bigg],
\end{multline*}
where
$$
\lim_{T_0\to \infty} \(\limsup_{Y\to \infty} 
\frac{1}{Y\log^2 Y} \sum_{\chi\in C(q)}  \int_{1}^Y |\Sigma_1(e^y;\chi)+
\Sigma_2(e^y;\chi)+\Sigma_3(e^y;T_0;\chi)|^2\, dy
\) =0.
$$
On the other hand (cf. \cite{RS}),
$$
\Delta(x;q,a,b) = \frac{\sqrt{x}}{\log x} \( \frac{N(q,b)-N(q,a)}{\phi(q)}
- \sum_{\chi\in C(q)} \( \overline{\chi}(a)-\overline{\chi}(b) \) 
\sum_{|\g| \le T_0}\frac{x^{i\g}}{1/2+i\g} + \Sigma_4(x;T_0) \),
$$
where 
$$
\lim_{T_0\to \infty} \(\limsup_{Y\to\infty} Y^{-1} \int_{1}^Y |\Sigma_4(e^y;T_0)|^2\, dy
\) =0.
$$
Now assume $m(\g)=1$ for all $\g$, and note that
$$
 \sum_{\chi\in C(q)}
\( \overline{\chi}(a)-\overline{\chi}(b) \) 
 A(\chi) = N(q,a)-N(q,b).
$$
Letting $T_0\to \infty$ finishes the proof of Theorem \ref{DeltaDelta2}.

%
%
\section{proof of Lemma \ref{analytic}}\label{sec:analytic}
%
%

Assume ERH$_q$ throughout.
We first estimate $G(x,u,v;\chi)$ for different ranges of $u,v$.

\begin{lem}\label{Gbounds}
Let $\chi\in C(q)$, $\chi\ne \chi_0$.
For $x\ge 4$, the following hold:
\begin{enumerate}

\item For $u,v \geq \eps$,
$G(x,u,v;\chi) \ll x^{\frac{1}{2} - \frac {\eps}{2}} \log^5 x$.

\item For $u \geq 2\eps$, $v \leq \eps$ and $T=T(x)$,
\begin{align*}
x^{-1/2} G(x,u,v;\chi) &= \sum_{|\g|\le T} \frac{F(\frac{1}{2}+u-v+i\g
  ,\chi)x^{-v+i\g}}{\frac{1}{2}-v+i\g}
-A(\chi) \frac{F(\frac{1}{2}+u-v,\chi)x^{-v}}{1-2v} \\
&\qquad\qquad + O(x^{-\frac{3\eps}{2}} \log^5 x).
\end{align*}

\item For $u\le 2\eps$, $v\le 2\eps$, $u\ne v$
and $T=T(x)$,
\begin{multline*}
x^{-1/2} G(x,u,v;\chi) = \sum_{|\g| \le T}
 \frac{F(\frac{1}{2}+u-v+i\g
   ,\chi)x^{-v+i\g}}{\frac{1}{2}-v+i\g} +
 \frac{F(\frac{1}{2}-u+v+i\g
   ,\chi)x^{-u+i\g}}{\frac{1}{2}-u+i\g} \\  \qquad -
  A(\chi) \( \frac{F(\frac{1}{2}+u-v,\chi)x^{-v}}{1-2v}+
  \frac{F(\frac{1}{2}-u+v,\chi)x^{-u}}{1-2u} \)
  + O (x^{-3\eps} \log^5 x).
\end{multline*}
\end{enumerate}
\end{lem}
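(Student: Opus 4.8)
The plan is to derive all three estimates from the truncated Perron formula of Lemma~\ref{Perron} by moving the line of integration to the left and collecting residues. With $T=T(x)$ this gives
\begin{equation*}
G(x,u,v;\chi)=\frac{1}{2\pi i}\int_{c-iT}^{c+iT}F(s+u,\chi)F(s+v,\chi)\frac{x^s}{s}\,ds+O(\log^3 x),\qquad c=1+\tfrac{1}{\log x}.
\end{equation*}
By Lemma~\ref{FsHs} and ERH$_q$ (which contains RH for $\zeta$), in the strip $\tfrac12-3\eps\le\Re s\le c$ the integrand is meromorphic with poles only at: $s=0$, with residue $F(u,\chi)F(v,\chi)$ coming from $x^s/s$; $s=\tfrac12-v+i\g$ and $s=\tfrac12-u+i\g$, simple with residue $m(\g)$, coming from the $-\tfrac{L'}{L}$ part of $F(s+v,\chi)$, resp.\ $F(s+u,\chi)$; and, when $A(\chi)=1$, $s=\tfrac12-v$ and $s=\tfrac12-u$, simple with residue $-\tfrac12A(\chi)$, coming from the $A(\chi)\tfrac{\zeta'}{\zeta}(2(s+v))$, resp.\ $A(\chi)\tfrac{\zeta'}{\zeta}(2(s+u))$, term. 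Every other singularity of the integrand — zeros of $\zeta(2(s+u))$, $\zeta(2(s+v))$, trivial zeros of the $L$-functions, and singularities of $H(s+u,\chi)$, $H(s+v,\chi)$ — has $\Re s\le\tfrac13<\tfrac12-3\eps$, so it is never encountered below.

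For part~(1) I would shift to $\Re s=\tfrac12-\tfrac\eps2$. Since $u,v\ge\eps$, every pole above has real part at most $\tfrac12-\eps<\tfrac12-\tfrac\eps2$, and $s=0$ also lies to the left of the new line, so no residue is collected. On the shifted line $\Re(s+u),\Re(s+v)\ge\tfrac12+\tfrac\eps2$, so Lemma~\ref{NTchi}(3),(4),(5) gives $|F(s+u,\chi)|,|F(s+v,\chi)|\ll\log(|\Im s|+2)$, whence the vertical integral is $\ll x^{1/2-\eps/2}\int_{-T}^{T}\frac{\log^2(|t|+2)}{|s|}\,dt\ll x^{1/2-\eps/2}\log^3 x$. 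On the horizontal segments at the admissible height $\pm T$ we have $\Re s\ge\tfrac12-\tfrac\eps2>\tfrac25$, hence also $\Re(s+u),\Re(s+v)\ge\tfrac25$, so Lemma~\ref{zeta} yields a factor $O(\log^2 T)$ from each $F$, while $|x^s/s|\le x^c/T\ll1$ and the length is $O(1)$; these segments contribute $O(\log^4 x)$. Together with the $O(\log^3 x)$ Perron error this gives $G(x,u,v;\chi)\ll x^{1/2-\eps/2}\log^3 x$, as required (for large $u$ or $v$ the bounds only improve).

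Parts~(2) and~(3) run the same argument with the line moved to $\Re s=\tfrac12-\tfrac{3\eps}2$ and $\Re s=\tfrac12-3\eps$ respectively, now keeping the residues. In part~(2), $v\le\eps<\tfrac{3\eps}2$ and $u\ge2\eps$, so exactly the poles of $F(s+v,\chi)$ (at $\Re s=\tfrac12-v$, plus the $\tfrac{\zeta'}{\zeta}$ pole at $\Re s=\tfrac12-v$ when $A(\chi)=1$) are crossed, while those of $F(s+u,\chi)$ (at $\Re s=\tfrac12-u\le\tfrac12-2\eps$) and $s=0$ stay to the left; multiplying the residue sum by $x^{-1/2}$ produces precisely $\sum_{|\g|\le T}\frac{F(\tfrac12+u-v+i\g,\chi)x^{-v+i\g}}{\tfrac12-v+i\g}$ and, from the $\tfrac{\zeta'}{\zeta}$ pole, $-\frac{A(\chi)}{1-2v}F(\tfrac12+u-v,\chi)x^{-v}$ (using $\frac{-1/2}{1/2-v}=\frac{-1}{1-2v}$). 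In part~(3), $u,v\le2\eps$ and $u\ne v$, so all four families of poles are crossed; the hypothesis $u\ne v$ is exactly what makes the abscissae $\tfrac12-u$ and $\tfrac12-v$ distinct, so at each pole only one of $F(s+u,\chi)$, $F(s+v,\chi)$ is singular and the residues are the simple ones displayed in the statement (for $u=v$ the integrand has a double pole there and the formula would change). In both cases the new vertical integral and the horizontal segments are bounded exactly as in part~(1): the shifted line stays at distance $\ge\tfrac\eps2$ from every pole of $F(\,\cdot+u,\chi)$ and $F(\,\cdot+v,\chi)$ and at distance $\ge\eps$ (in the $2(s+\cdot)$ variable) from the pole of $\tfrac{\zeta'}{\zeta}$ at $1$, uniformly in the admissible $u,v$, so Lemma~\ref{NTchi}(3) (valid for $\Re s\ge-1$) and its analogue for $\zeta$ again give the $\ll\log(|\Im s|+2)$ bound just to the left of the critical line; the vertical integral is then $\ll x^{1/2-\eta}\log^3 x$ and the horizontal segments $\ll\log^4 x$ with $\eta=\tfrac{3\eps}2$, resp.\ $3\eps$, and dividing by $x^{1/2}$ gives the stated errors $O(x^{-3\eps/2}\log^5 x)$ and $O(x^{-3\eps}\log^5 x)$.

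The one genuinely delicate point — and the main obstacle — is the pole bookkeeping together with the uniformity of the bounds for $F(\sigma+u+it,\chi)$ and $F(\sigma+v+it,\chi)$ when $\sigma$ lies just to the left of the critical line (small $v$ in part~(2), and all of part~(3)): one must check that for every admissible pair $(u,v)$ the shifted contour maintains a fixed positive distance from all poles, control the factor $\tfrac{\zeta'}{\zeta}(2(s+v))$ near its pole at $\Re=1$, and verify $\Re s\ge\tfrac25$ on the horizontal segments so that Lemma~\ref{zeta} applies. All of this is routine given Lemmas~\ref{FsHs}--\ref{zeta}; it is just a careful accounting of residues with no new analytic input.
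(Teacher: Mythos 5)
Your argument is correct and is essentially the paper's own proof: the truncated Perron formula of Lemma~\ref{Perron}, a contour shift to the same abscissas $\frac12-\frac{\eps}{2}$, $\frac12-\frac{3\eps}{2}$ and $\frac12-3\eps$, residues collected from the poles of $F(s+v,\chi)$ (and of $F(s+u,\chi)$ in part (3), where the hypothesis $u\ne v$ is exactly what keeps those poles simple), and the new segments bounded via Lemmas~\ref{FsHs}, \ref{NTchi} and \ref{zeta} just as in the text, with error accounting that comfortably fits the stated $O$-terms. The only detail to tighten is your residue value at a zero: by Lemma~\ref{NTchi}(3) the $-\frac{L'}{L}$ part of $F(s+v,\chi)$ has residue $-m(\g)$, not $+m(\g)$, at $s=\frac12-v+i\g$ (just as the $\frac{\zeta'}{\zeta}(2s+2v)$ term has residue $-\frac12$), so a literal computation attaches a minus sign to the zero sums in parts (2) and (3); your sign agrees with the convention used in the displayed formula and again in the proof of Lemma~\ref{doubleint} (where $F(\frac12+u-v+i\g,\chi)=\frac{m(\g)}{u-v}+\cdots$ is written), and since the two occurrences cancel in the main term extracted in Lemmas~\ref{doubleint} and \ref{TT0}, nothing downstream is affected, but as a standalone residue claim it should read $-m(\g)$.
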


\begin{proof}
Assume $u\ge \eps$ and $v\ge \eps$.  Start with the approximation of
$G(x,u,v;\chi)$ given by Lemma \ref{Perron}, then deform the segment
of integration to the contour consisting of three straight segments connecting
$c-iT$, $b-iT$, $b+iT$ and $c+iT$, where $b=\frac12 - \frac{\eps}{2}$
and $T=T(x)$.  The rectangle formed by the new and old contours
does not contain any poles of $F(s+u,\chi)F(s+v,\chi)s^{-1}$.
On the three new segments, by Lemmas \ref{FsHs}, \ref{NTchi} and \ref{zeta}, we
have $|F(s+u,\chi)F(s+v,\chi)| \ll \log^4 T$.  Hence the integral of
$F(s+u,\chi)F(s+v,\chi) x^s s^{-1}$ over the three segments is
$$
\ll (\log^4 x) \Bigl( \int_b^c \frac{x^\sigma}{|\sigma+iT|}\, d\sigma
+ \int_{-T}^T \frac{x^b}{|b+it|}\, dt \Bigr) \ll x^b \log^5 x.
$$
This proves (1).

We now consider the case $v \leq \eps$ and $u \geq 2\eps$.  
We set $b = \frac{1}{2}-\frac{3\eps}{2}$ and deform the
contour of integration as in the previous case.  
Since $u+b \ge \frac12 + \frac{\eps}{2}$ and $v+b \le \frac12 -
\frac{\eps}{2}$, we have by Lemma \ref{zeta} that
$|F(s+u,\chi)F(s+v,\chi)| \ll \log^4 T \ll \log^4 x$
on all three new segments.  As in the proof of (1), the integral over
the new contour is $\ll x^b \log^5 x$.
We pick up residue terms from poles of
$F(s+v,\chi)$ inside the rectangle coming from the nontrivial zeros of
$L(s,\chi)$, plus a pole at $s=\frac12-v$ from
the $\frac{\zeta'}{\zeta}(2s+2v)$ term if $\chi^2=\chi_0$.  The sum of
the residues is
$$
\sum_{|\g|\le T}
\frac{F(\frac{1}{2}+u-v+i\g
  ,\chi)x^{\frac{1}{2}-v+i\g}}{\frac{1}{2}-v+i\g}
-A(\chi) \frac{F(\frac{1}{2}+u-v,\chi)x^{\frac{1}{2}-v}}{1-2v},
$$
and (2) follows.

Finally, consider the case $0\le u,v \le 2\eps$. 
Let $b = \frac{1}{2} - 3\eps$ and deform the contour as in the
previous cases.  As before, the integral over the new contour is
$O(x^b \log^5 x)$.  This time, we pick up 
residues from poles of both $F(s+u,\chi)$ and $F(s+v,\chi)$.  The sum of the
residues is
\begin{align*}
\sum_{|\g|\le T}  \Bigl( \frac{F(\frac{1}{2}+u-v+i\g,\chi) 
x^{\frac{1}{2}-v+i\g}}{\frac{1}{2}-v+i\g}+ \frac{F(\frac{1}{2}-u+v+i\g
   ,\chi)x^{\frac{1}{2}-u+i\g}}{\frac{1}{2}-u+i\g} \Bigr) \\
- A(\chi) \( \frac{F(\frac{1}{2}+u-v,\chi)x^{\frac{1}{2}-v}}{1-2v}+
\frac{F(\frac{1}{2}-u+v,\chi)x^{\frac{1}{2}-u}}{1-2u} \),
\end{align*}
and (3) follows.
\end{proof}  

\begin{proof}[Proof of Lemma \ref{analytic}]
Begin with
$$
 \int_0^\infty \int_0^\infty G(x,u,v;\chi)\, du\, dv  = I_1 + I_2 +
 2I_3 + I_4,
$$
where $I_1$ is the integral over  $\max(u,v)\ge \log x$, 
$I_2$ is the integral over $2\eps \le \max(u,v) \le \log
x$ and $\min(u,v)\ge \eps$, $I_3$ is the integral over $0\le v\le
\eps$, $2\eps \le u \le \log x$, and $I_4$ is the integral over $0\le
u,v \le 2\eps$.  For $\max(u,v)\ge \log x$, 
$$
|G(x,u,v;\chi)| \le \sum_{p\le x} \frac{\log p}{p^u} \sum_{p\le x}
\frac{\log q}{q^v} \ll \frac{x}{2^{\max(u,v)}},
$$
whence $I_1 \ll x^{1-\log 2}$.  By Lemma \ref{Gbounds} (1), $I_2 \ll
x^{1/2-\eps/2} \log^7 x$.  

By Lemma \ref{Gbounds} (2), 
\be\label{I3}
\begin{split}
I_3 &= x^{1/2} \int_0^\eps \int_{2\eps}^{\log x} 
\sum_{|\g|\le T} \frac{F(\frac{1}{2}+u-v+i\g
  ,\chi)x^{-v+i\g}}{\frac{1}{2}-v+i\g}
-A(\chi) \frac{F(\frac{1}{2}+u-v,\chi)x^{-v}}{1-2v}\, du\, dv \\
&\qquad\qquad + O(x^{1/2-\frac{3\eps}{2}} \log^6 x).
\end{split}
\ee
By Lemmas \ref{NTchi} and \ref{zeta},
\be\label{I31}
\int_{0}^{\eps} \int_{2\eps}^{\log x} \frac{F(\frac{1}{2}+u-v
  ,\chi)x^{-v}}{1-2v} du\ dv \ll \int_{0}^{\eps} x^{-v}\, dv
\ll \frac{1}{\log x}.
\ee
Let
$$
\Sigma_1(x) = (\log x) \int_0^\eps \int_{2\eps}^{\log x} 
\sum_{0<|\g|<T} \frac{F(\frac{1}{2}+u-v+i\g
  ,\chi)x^{-v+i\g}}{\frac{1}{2}-v+i\g}\, du\, dv.
$$
Since $\frac{1}{2}+u-v \ge \frac{1}{2}+\eps$ for $0 \le v  \le \eps$ and $2
\eps \le u \le \log x$, by Lemmas \ref{FsHs}, \ref{NTchi}, and
\ref{zeta}, 
$$
F(\frac{1}{2}+u-v+i\g,\chi) = -\frac{L'}{L}(\frac{1}{2}+u-v+i\g,
\chi) + O(1) \ll \log(|\g|+3).
$$
We also have $F(1/2+u-v+i\g,\chi) \ll 2^{-u}$ for $u\ge 2$.
Thus, for positive integers $n$,
\begin{align*}
\int_{2^n}^{2^{n+1}} |\Sigma_1 (e^y)|^2 dy &\ll 2^{2n}
\sum_{|\g_1|,|\g_2|\le T} \frac{\log (|\g_1|+3)
  \log(|\g_2|+3)}{|\g_1 \g_2|} \\
& \qquad\qquad \times \int_{0}^{\eps}
\int_{0}^{\eps}  \Bigg| \int_{2^n}^{2^{n+1}}
e^{y(-v_1+i\g_1-v_2-i\g_2)} dy \Bigg| dv_1 dv_2.
\end{align*}
The summands with $|\g_1-\g_2| <  1$ contribute, by Lemma
\ref{NTchi},
\begin{align*}
&\ll 2^{2n} \sum_{\substack{|\g_1|, |\g_2| \le T
      \\ |\g_1-\g_2| < 1}}   \frac{\log (|\g_1|+3) \log
    (|\g_2|+3)}{|\g_1||\g_2|} \int_{2^n}^{2^{n+1}}
  \Big(\int_{0}^{\eps} e^{-vy} dv\Big)^2 dy \\
&\ll 2^n \sum_{|\g|\le  T }  \frac{{\log}^3
  (|\g|+3)}{|\g|^2} \ll 2^n.
\end{align*}
The summands with $|\g_1-\g_2| \ge 1$ contribute, by Lemma \ref{sumg1g2},
\[
\ll \sum_{\substack{|\g_1|, |\g_2| < T \\ |\g_1-\g_2|
    \geq 1}} \frac{2^{2n} \log (|\g_1|+3) \log
  (|\g_2|+3)}{|\g_1||\g_2||\g_1-\g_2|}
\Big(\int_{0}^{\eps} e^{-v2^n} dv \Big)^2
 \ll 1.
\]
Thus, $\int_{2^n}^{2^{n+1}} |\Sigma_{1} (e^y)|^2 dy = O (2^n)$.
Summing over $n\le \frac{\log Y}{\log 2}+1$ yields
$\int_1^Y |\Sigma_1(e^y)|^2\, dy =  O(Y)$.

Finally, using Lemma \ref{Gbounds} (3) gives
\be\label{I4}
\begin{split}
I_4 &= x^{1/2} \int_0^{2\eps} \int_0^{2\eps} 
 \sum_{|\g|\le T}
 \frac{F(\frac{1}{2}+u-v+i\g
   ,\chi)x^{-v+i\g}}{\frac{1}{2}-v+i\g} +
 \frac{F(\frac{1}{2}-u+v+i\g
   ,\chi)x^{-u+i\g}}{\frac{1}{2}-u+i\g} \\ 
& \qquad -
  A(\chi) \( \frac{F(\frac{1}{2}+u-v,\chi)x^{-v}}{1-2v}+
  \frac{F(\frac{1}{2}-u+v,\chi)x^{-u}}{1-2u} \) \, du\, dv
  + O (x^{\frac12-3\eps} \log^3 x).
\end{split}
\ee
Now assume $\chi^2=\chi_0$.  We will show that
\be\label{I41}
- \int_0^{2\eps} \int_0^{2\eps}
 \frac{F(\frac{1}{2}+u-v,\chi)x^{-v}}{1-2v}+ 
  \frac{F(\frac{1}{2}-u+v,\chi)x^{-u}}{1-2u} \, du\, dv
= \frac{\log\log x  + O(1)}{\log x}.
\ee
Together with \eqref{I3}, \eqref{I31} and \eqref{I4}, this completes
the proof of Lemma \ref{analytic}.

Note that $F(\frac{1}{2}+w)=-\frac{1}{2w}+O(1)$ by Lemmas \ref{FsHs}
and \ref{zeta}.
Replacing $x$ with $e^y$, the left side of \eqref{I41} is
$$
=\frac12 \int_{0}^{2\eps} \int_{0}^{2\eps}
  \frac{e^{-yv}}{(u-v)(1-2v)} +
  \frac{e^{-yu}}{(v-u)(1-2u)}du\ dv + O\Big(\int_{0}^{2\eps}
  \int_{0}^{2\eps} e^{-yv} du\ dv\Big).
$$
The error term above is $O(1/y)$.
In the main term, when $|u-v|<1/y$, the integrand is $O(y e^{-vy})$
and the corresponding part of the double integral is $O(1/y)$.
When $u \ge v + 1/y$, the integrand is
$$
\frac{e^{-vy}}{u-v} + O\pfrac{v e^{-vy}+e^{-uy}}{u-v}
$$
and the corresponding part of the double integral is
$$
\int_0^{2\eps} e^{-vy} \log\pfrac{y}{2\eps-v}\, dv + O\pfrac{1}{y} =
\frac{\log y + O(1)}{y}.
$$
The contribution from $u\le v-1/y$ is, by symmetry, also $\frac{\log y+O(1)}{y}$. 
The asymptotic \eqref{I41} follows.
\end{proof}

%
%
%
%
\section{Proof of Lemma \ref{doubleint}}\label{sec:doubleint}
%
%
%
%

\begin{lem}\label{Qint}
 Uniformly for $y\ge 1$, $0 < |\xi| \leq 1$, $|w| \geq \frac{1}{2}$
 and $a\ge 0$ we have
$$
\Biggl| \int_0^{2 \eps} \int_0^{2 \eps} \frac{v^a e^{-vy}}{(u-v+i\xi)(w-v)} 
 du\ dv \Biggr|   \ll \frac{(4\eps)^{a}\log \min(2y, \frac{2}{|\xi|})}{y|w|}.
$$
\end{lem}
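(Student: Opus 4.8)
The strategy is to carry out the double integral in the order: first dispose of the factor $1/(w-v)$, then evaluate the inner $u$-integral explicitly, and finally reduce everything to three elementary one-variable integrals in $v$.

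\emph{Reduction of the $w$-factor.} Since $0\le v\le 2\eps$ and $|w|\ge\frac12$, we have $|w-v|\ge |w|-2\eps\ge(1-4\eps)|w|\ge\frac12|w|$, so $1/|w-v|\ll 1/|w|$ uniformly. Thus it suffices to prove
\[
\int_0^{2\eps} v^a e^{-vy}\left|\int_0^{2\eps}\frac{du}{u-v+i\xi}\right|dv
\ll \frac{(4\eps)^a\log\min(2y,2/|\xi|)}{y}.
\]

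\emph{The inner integral.} As $\xi\ne 0$, the segment $\{u-v+i\xi:u\in[0,2\eps]\}$ avoids the origin and lies in a single open half-plane $\{\pm\Im z>0\}$, so $\int_0^{2\eps}\frac{du}{u-v+i\xi}=\mathrm{Log}(2\eps-v+i\xi)-\mathrm{Log}(-v+i\xi)$ for a continuous branch; the change of argument has modulus $<\pi$, and the real parts are $\log|2\eps-v+i\xi|$ and $\log\sqrt{v^2+\xi^2}$. Since both moduli are at most $\sqrt{4\eps^2+1}<2$, one gets
\[
\left|\int_0^{2\eps}\frac{du}{u-v+i\xi}\right|
\ll 1+\log^+\!\frac{1}{\sqrt{v^2+\xi^2}}+\log^+\!\frac{1}{\sqrt{(2\eps-v)^2+\xi^2}},
\]
where $\log^+t=\max(\log t,0)$. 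Insert this and bound the three resulting $v$-integrals separately.

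\emph{The three pieces.} The $1$-term contributes $\le(2\eps)^a\int_0^\infty e^{-vy}\,dv=(2\eps)^a/y$. The $\log^+\!\frac1{\sqrt{(2\eps-v)^2+\xi^2}}$-term has its singularity at $v=2\eps$: on $[0,\eps]$ the log-factor is $\le\log(1/\eps)=O(1)$ so that part is $\ll(2\eps)^a/y$, while on $[\eps,2\eps]$ one uses $e^{-vy}\le e^{-\eps y}\ll 1/y$ together with $\int_0^\eps\log(1/s)\,ds\ll 1$, again giving $\ll(2\eps)^a/y$. The genuinely growing piece is $I_1:=\int_0^{2\eps}v^ae^{-vy}\log^+\!\frac1{\sqrt{v^2+\xi^2}}\,dv$, and here one produces two bounds: using $\log^+\!\frac1{\sqrt{v^2+\xi^2}}\le\log^+\!\frac1{|\xi|}$ gives $I_1\ll\frac{(2\eps)^a}{y}\bigl(1+\log^+\!\frac1{|\xi|}\bigr)\ll\frac{(2\eps)^a}{y}\log\frac2{|\xi|}$; using instead $\log^+\!\frac1{\sqrt{v^2+\xi^2}}\le\log^+\!\frac1v$ and splitting the integral at $v=\min(1/y,2\eps)$ (with $v^a\le(1/y)^a$ on $(0,1/y)$, $\int_0^{1/y}\log(1/v)\,dv=(1+\log y)/y$, and $\log^+(1/v)\le\log y$ for $v\ge1/y$; the case $y\le 1/(2\eps)$ is separate, handled via $\int_0^{2\eps}\log(1/v)\,dv\ll1$ and the lower bound for $\frac{\log(2y)}{y}$ on $[1,1/(2\eps)]$) gives $I_1\ll\frac{(2\eps)^a}{y}\log(2y)$. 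Taking the smaller of the two and using $\log\min(2y,2/|\xi|)\ge\log 2$ to absorb the $1$-terms, and $(2\eps)^a\le(4\eps)^a$, yields the claim.

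\emph{Main obstacle.} Nothing deep is involved (no $L$-function input), but one must be careful to generate \emph{both} estimates for $I_1$, since it is the interplay between them that produces the $\min(2y,2/|\xi|)$; one must also treat the small-$y$ regime $y\le 1/(2\eps)$ of the split by hand rather than by the generic argument.
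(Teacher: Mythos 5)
Your proof is correct, and it takes a genuinely different route from the paper's. The paper never integrates in $u$ exactly: it splits into the cases $|\xi|\ge 1/y$ (where the pointwise bound $\min(1/|u-v|,1/|\xi|)$ suffices) and $|\xi|<1/y$, and in the latter case decomposes the square according to $|u-v|\le|\xi|$, $|\xi|<|u-v|\le 1/y$, $|u-v|>1/y$; in the middle range a naive absolute-value bound would produce $\log\frac{1}{y|\xi|}$, which can dwarf $\log(2y)$ when $\xi$ is extremely small, so the paper symmetrizes in $(u,v)\leftrightarrow(v,u)$ and exploits the cancellation $\frac{1}{u-v+i\xi}+\frac{1}{v-u+i\xi}=\frac{2i\xi}{(u-v+i\xi)(v-u+i\xi)}$ together with the elementary estimate on $u^ae^{-uy}-v^ae^{-vy}$ (this is where its $(4\eps)^a$ comes from). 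You instead evaluate the inner $u$-integral in closed form as a difference of logarithms, which is legitimate because the numerator is independent of $u$, and this encodes exactly the cancellation the paper's symmetrization is designed to capture; after that, everything reduces to one-dimensional integrals in $v$, with the two bounds on $I_1$ producing the $\min(2y,2/|\xi|)$ and the small-$y$ regime checked by hand. Your argument is somewhat cleaner (no case split in $\xi$, no symmetrization), gives the slightly sharper constant $(2\eps)^a\le(4\eps)^a$, and all estimates are visibly uniform in $a$; the paper's decomposition is the one that would survive if the numerator also depended on $u$ and the inner integral were not explicitly computable.
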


\begin{proof}  Let $I$ denote the double integral in the
Lemma.  If $|\xi| \geq \frac{1}{y}$, then
\begin{align*}
I&\ll \frac{1}{|w|} \int_0^{2 \eps} v^a e^{-vy} \int_0^{2\eps} 
\min\( \frac{1}{|u-v|}, \frac{1}{|\xi|} \)\, du\, dv \\
&\ll \frac{(2\eps)^a}{|w|} \(1 + \log \frac{2}{|\xi|}\)
 \int_0^{2 \eps} e^{-vy}\, dv
\ll \frac{(2\eps)^a \log({\frac{2}{|\xi|}})}{y|w|}.
\end{align*}
If $|\xi| < \frac{1}{y}$, let $I=I_1+I_2+I_3$, where $I_1$ is the part of
 $I$ coming from $|u-v| \leq |\xi|$, $I_2$ is the part of $I$ coming from
 $|\xi| < |u-v| \leq \frac{1}{y}$, and $I_3$ is the part of $I$ coming
 from $|u-v|>\frac{1}{y}$.  We have
$$
I_1 \ll \frac{1}{|w\xi|} \; \;\;  \iint 
\limits_{\substack{0\le u,v\le 2\eps \\ |u-v|\leq |\xi|}}
v^a e^{-vy} du\ dv \ll \frac{(2\eps)^a}{y|w|}.
$$
and
$$
I_3 \ll \frac{(2\eps)^a}{|w|}\;\;  \iint 
\limits_{\substack{0\le u,v \le 2\eps \\ |u-v|\geq \frac{1}{y}}} 
\frac{e^{-vy}}{|u-v|} du\ dv
\ll \frac{(2\eps)^a}{|w|} \int_0^{2 \eps} e^{-vy}(\log y + 1) dv 
\ll \frac{(2\eps)^a \log(2y)}{y|w|}.
$$
By symmetry,
$$
I_2 = \frac12 \iint\limits_{|\xi| < |u-v| \le 1/y} \frac{v^a
  e^{-vy}}{(u-v+i\xi)(w-v)} + \frac{u^a e^{-uy}}{(v-u+i\xi)(w-u)}\,
  du\, dv. 
$$
Since, $|u^a-v^a|\le a|u-v| (2\eps)^{a-1}$,
\be\label{euy}\begin{split}
u^a e^{-uy} - v^a e^{-vy} &= e^{-vy} v^a \( e^{(v-u)y}-1 \) +
e^{-vy} (u^a-v^a) e^{(v-u)y} \\&\ll e^{-vy} y |u-v| (4\eps)^a.
\end{split}\ee
We deduce that
\begin{align*}
I_2&= \!\!\!\iint\limits_{\substack{0\le u,v\le 2\eps \\ |\xi| < |u-v| \le 1/y}} 
 \!\! \frac{(w-u)(u-v)(u^ae^{-uy}-v^ae^{-vy})+u^ae^{-uy}(u-v)^2+O(|\xi w| (2\eps)^a
  e^{-vy})}{2(u-v+i\xi)(v-u+i\xi)(w-u)(w-v)} du\ dv \\
&\ll \frac{(4\eps)^a}{|w|}\;\;\; 
 \iint\limits_{\substack{0\le u,v\le 2\eps \\ |\xi|<|u-v| \le 1/y}}
y e^{-vy} + \frac{|\xi| e^{-vy}}{|u-v|^2} \, du\, dv
\ll \frac{(4\eps)^a}{y|w|}.
\end{align*}
\end{proof}


\begin{proof}[Proof of Lemma \ref{doubleint}]
Let $y=\log x$.
We first note by Lemmas \ref{FsHs} and \ref{NTchi},
$$
F(\frac{1}{2}+u-v +i \g,\chi) = \frac{m(\g)}{u-v}+R(\g,u-v)+R'(\g,u-v),
$$
where
$$
R(\g,w) = \sum_{0< |\g'-\g| \leq 1} \frac{1}{w+i(\g - \g')}, \qquad 
R'(\g,u-v) =  O(\log (|\g|+3)).
$$
Then, the double integral in Lemma \ref{doubleint} is
$$
=\sum_{i=1}^4 \Sigma_{2,i}(y)+2 \sum_{\substack{|\g| \leq T \\
\g \text{ distinct}}} m^2(\g) e^{iy\g} 
(\frac{1}{2}+i\g) \int_0^{2 \eps-2^{-n}} \frac{ e^{-yv}}{\frac{1}{2}-v+i \g}
\int_{v+2^{-n}}^{2 \eps} \frac{du}{(u-v)(\frac{1}{2}-u+i \g)} dv,
$$
where
\begin{align*}
\Sigma_{2,1}(y) &= 2 \int_0^{2 \eps} \int_0^{2 \eps} \sum_{|\g| \leq T} 
\frac{ R(\g,u-v) e^{y(-v+i \g)}}{\frac{1}{2}-v+i \g} du\ dv, \\
\Sigma_{2,2}(y) &= 2  \int_0^{2 \eps} \int_0^{2 \eps}  
\frac{ R'(\g,u-v) e^{y(-v+i \g)}}{\frac{1}{2}-v+i \g}\, du\, dv, \\
\Sigma_{2,3}(y)& =  \sum_{\substack{|\g| \leq T \\
\g \text{ distinct}}} m^2(\g) e^{iy \g} (\frac{1}{2} + i\g)
 \iint \limits_{\substack{0 \leq u,v \leq 2 \eps \\ |u-v| \leq 2^{-n}}} 
\frac{ e^{-yv}-e^{-uy}}{(u-v)(\frac{1}{2}-v+i \g)(\frac{1}{2}-u+i \g)} dv\ du, \\
\Sigma_{2,4}(y) &= 2 \sum_{\substack{|\g| \leq T \\
\g \text{ distinct}}} m^2(\g) e^{iy \g} (\frac{1}{2} + i\g)
 \int_{2^{-n}}^{2 \eps} \int_{0}^{v-2^{-n}}  
\frac{ e^{-yv}}{(u-v)(\frac{1}{2}-v+i \g)(\frac{1}{2}-u+i \g)} du\ dv.
\end{align*}
We show that $\sum_{j=1}^4 \Sigma_{2,j}(y)$ is small in mean square.  
Note that for  $2^n < y \le 2^{n+1}$, $T=T(e^y)$ is constant.  
Also, by Lemma \ref{NTchi}, we have 
\be\label{mgam}
m(\g) \ll \log (|\g|+3).
\ee


First, by Lemmas \ref{NTchi} and \ref{sumg1g2},
\be\label{Sigma22}
\begin{split}
\int_{2^n}^{2^{n+1}}  \!\! &|\Sigma_{2,2}(y)|^2\, dy = 4
\iiiint\limits_{[0,2\eps]^4} \!\!\! \sum_{\substack{|\g_1|\le T \\ |\g_2| \le T}}
\frac{R'(\g_1,u_1-v_1)\overline{R'(\g_2,u_2-v_2)}}
{(\frac12-v_1+i\g_1)(\frac12-v_2-i\g_2)} \\
&\qquad\qquad \times
\int_{2^n}^{2^{n+1}} \!\!\!\! e^{y(-v_1-v_2+i\g_1-i\g_2)}\, dy \, du_j dv_j \\
&\ll \sum_{|\g_1-\g_2|>1} \frac{\log (|\g_1|+3) \log (|\g_2|+3)}
{|\g_1\g_2|\cdot|\g_1-\g_2|} \iiiint\limits_{[0,2\eps]^4}
 e^{-2^n(v_1+v_2)}\, du_j dv_j \\
&\qquad +  \sum_{|\g_1-\g_2|\le 1} \frac{\log (|\g_1|+3) \log (|\g_2|+3)}
{|\g_1\g_2|} \int_{2^n}^{2^{n+1}}\iiiint\limits_{[0,2\eps]^4} e^{-y(v_1+v_2)}\,
du_j dv_j dy \\
&\ll 2^{-n}.
\end{split}
\ee


For the remaining sums, for brevity we define
$$
\rho_1 = \frac12 + i\g_1, \qquad \rho_2 = \frac12 - i\g_2.
$$
Next,
\begin{multline*}
\int_{2^n}^{2^{n+1}} |\Sigma_{2,3}(y)|^2 dy = \int_{2^{n}}^{2^{n+1}} 
 \sum_{|\g_1|, |\g_2| \leq T} m(\g_1) m(\g_2) e^{i y (\g_1 - \g_2)} \rho_1 \rho_2 \\ \times
 \iiiint \limits_{\substack{[0,2\eps]^4 \\ |u_j-v_j| \leq 2^{-n}}}\!\!\!
 \frac{(e^{-v_1y}-e^{-u_1y})(e^{-v_2 y}-e^{-u_2 y})}
{\prod_{j=1}^2 (u_j-v_j)(\rho_j-v_j)(\rho_j-u_j)}
 dv_j dv_j\, dy.
\end{multline*}
By \eqref{euy}, the integrand in the quadruple integral is 
$\ll y^2 e^{-uy-u_1 y}|\rho_1 \rho_2|^{-2}.$
By Lemma \ref{NTchi}, for a given $\g_1$, there are $\ll \log (|\g_1|+3)$ zeros $\g_2$ with $|\g_1-\g_2| < 1$.  Hence, the contribution from terms with $|\g_1-\g_2| < 1$ is
$$
\ll 2^{-n} \sum_{|\g_1-\g_2| < 1} \frac{m(\g_1)m(\g_2)}{|\rho_1 \rho_2|}
\ll 2^{-n} \sum_{\g_1} \frac{\log^3 (|\g_1|+3)}{|\g_1|^2} \ll 2^{-n}.
$$
Using integration by parts,  we have
$$
\int_{2^n}^{2^{n+1}} e^{iy(\g_1-\g_2)} (e^{-v_1y}-e^{-u_2y})(e^{-v_1y}-e^{-u_2y})\, dy
\ll \frac{2^{3n} |u_1-v_1|\, |u_2-v_2| e^{-2^n(u_1+u_2)}}{|\g_1-\g_2|}
$$
uniformly in $u_1,v_1,u_2,v_2$.  Thus, by \eqref{mgam} and Lemma \ref{sumg1g2},
the contribution from terms with  $|\g_1-\g_2| \ge 1$ is
$$
\ll 2^{-n} \sum_{|\g_1-\g_2| \ge 1} \frac{m(\g_1)m(\g_2)}
{|\rho_1 \rho_2| \cdot |\g_1-\g_2|} \ll 2^{-n}.
$$
Combining these estimates, we have
\begin{equation} \label{Sigma23}
\int_{2^n}^{2^{n+1}} |\Sigma_{2,3}(y)|^2 dy \ll 2^{-n}.
\end{equation}


In the same manner, we have
$$
\int_{2^n}^{2^{n+1}} \!\!\! |\Sigma_{2,4}(y)|^2 dy\ =  \!\!
\sum_{\substack{|\g_1|\le T \\ |\g_2| \leq T}} \!\! m(\g_1)m(\g_2)
\rho_1 \rho_2 \int_{2^{n}}^{2^{n+1}} \!\!\!\!
\iiiint\limits_{\substack{[0,2\eps]^4 \\ u_j \le v_j - 2^{-n}}}
\frac{ e^{y(-v_1-v_2+i(\g_1-\g_2))}du_j dv_j}{\prod_{j=1}^2 
(u_j-v_j)(\rho_j-v_j)(\rho_j-u_j)}  dy.
$$
The contribution to the right side from terms with
 $|\g_1-\g_2|<1$ is
\begin{align*}
&\ll \sum_{|\g_1-\g_2| < 1}
\frac{m(\g_1)m(\g_2)}{|\g_1\g_2|} \int_{2^{n}}^{2^{n+1}} 
\Biggl( \int_{2^{-n}}^{2 \eps} \int_{0}^{v-2^{-n}} \frac{e^{-yv}}{(v-u)}\, du\, dv
\Biggr)^2  \\
&\ll \sum_{\g_1} \frac{\log^3 (|\g_1|+3)} {|\g_1|^2}
\int_{2^{n}}^{2^{n+1}} \Biggl( \int_{1/y}^{\infty} e^{-yv} \log(y v)\, dv \Biggr)^2 
\ll 2^{-n}.
\end{align*}
The terms with $|\g_1-\g_2|>1$ contribute
\begin{align*}
&\ll \sum_{\substack{|\g_1|, |\g_2| < T \\ |\g_1-\g_2| > 1}}  \frac{m(\g_1)m(\g_2)}
{|\g_1\g_2| \cdot |\g_1-\g_2|} \Biggl( \int_{2^{-n}}^{2 \eps} \int_{0}^{v-2^{-n}} 
\frac{ e^{-2^nv}}{v-u}\, du\ dv \Biggr)^2 \\
&\ll \sum_{|\g_1-\g_2| > 1}  \frac{\log (|\g_1|+3) \log (|\g_2|+3)}
{|\g_1\g_2| \cdot |\g_1-\g_2| } \pfrac{1}{2^n}^2 \ll \frac{1}{2^{2n}}.
\end{align*}
Therefore,
\begin{equation} \label{Sigma24}
\int_{2^n}^{2^{n+1}} |\Sigma_{2,4}(y)|^2 dy \ll 2^{-n}.
\end{equation}


Estimating an average of $\Sigma_{2,1}(y)$ is more complicated, since 
$R(\g,w)$ could be very large if $|w|$ is small and there is another
 $\g'$ very close to $\g$. 
We get around the problem by noticing that $R(\g,w)+R(\g,-w)$ is always small.
We first have, by \eqref{euy} and Lemma \ref{NTchi},
\begin{multline}\label{Sig211}
 \int_{2^n}^{2^{n+1}} |\Sigma_{2,1}(y)|^2 dy \ll
\sum_{\g_1,\g_2} \log^2(|\g_1|+3) \log^2(|\g_2|+3) 
\max_{\substack{0<|\g_1-\g_1'|\le 1  \\
0<|\g_2-\g_2'|\le 1}} \int_{2^n}^{2^{n+1}} e^{iy(\g_1-\g_2)}  \\
\times
 \iiiint\limits_{[0,2\eps]^4} 
\frac{ e^{y(-v_1-v_2)}}
{(u_1-v_1+i\xi_1)(\rho_1-v_1)(u_2-v_2+i\xi_2)(\rho_2-v_2)}
du_j dv_j\, dy,
\end{multline}
where $\xi_1=\g_1-\g_1'$ and $\xi_2=-(\g_2-\g_2')$.
Let 
$$
M(\g) = \max_{\substack{|\g-\g_1|\leq 1 \\ 0 < |\g_1-\g_1'| < 1}} \frac{2}{|\g_1-\g_1'|}.
$$
By Lemmas \ref{zeta} and \ref{Qint}, 
the terms with $|\g_1-\g_2|<1$ contribute
\begin{align*}
&\ll  \sum_{|\g_1-\g_2| < 1} \frac{ 
\log^2(|\g_1|+3)\log^2(|\g_2|+3)}
{|\g_1\g_2|} \int_{2^n}^{2^{n+1}} \frac{1}{y^2}
  \prod_{j=1}^2  \log     
  \(\min\(2y,\frac{2}{|\g_j - \g_j'|}\)\)  \, dy \\
&\ll \frac{1}{2^{n}} \sum_{\g_1} \frac{\log^5 (|\g_1|+3)}{|\g_1|^2}
\log^2 \( \min(2^{n+2},M(\g)) \) \, = o\pfrac{n^2}{2^n} \qquad (n\to\infty).
\end{align*}

Now suppose $|\g_1-\g_2|>1$.  With $\g_1,\g_2,\g_1',\g_2'$ all fixed, let
$\Delta=\g_1-\g_2$.
Fixing $u_1,v_1,u_2,v_2$, we integrate over $y$ first.  The quintuple
integral in \eqref{Sig211} is $J(2^{n+1})-J(2^n)$, where
$$
J(y)=e^{iy\Delta} \iiiint\limits_{[0,2\eps]^4} \frac{e^{-y(v_1+v_2)}}
{(i\Delta-v_1-v_2)\prod_{j=1}^2 (u_j-v_j+i\xi_j)(\rho_j-v_j)} \, du_j dv_j.
$$
Using
$$
\frac{1}{i\Delta-v_1-v_2}=\frac{1}{i\Delta}\sum_{k=0}^\infty 
\pfrac{v_1+v_2}{i\Delta}^k=\sum_{a,b\ge 0} \binom{a+b}{a} \frac{v_1^a v_2^b}
{(i\Delta)^{a+b}},
$$
together with Lemma \ref{Qint}, yields
$$
|J(y)| \ll \frac{\log^2 y}{|\rho_1 \rho_2 \Delta| y^2} \sum_{a,b\ge 0}
\binom{a+b}{a} \pfrac{4\eps}{|\Delta|}^{a+b} \ll 
 \frac{\log^2 y}{|\rho_1 \rho_2 \Delta| y^2}.
$$
Therefore, by Lemma \ref{sumg1g2},
$$
\sum_{\g_1,\g_2} \log^2(|\g_1|+3) \log^2(|\g_2|+3) 
\max_{\substack{0<|\g_1-\g_1'|\le 1 \\
0<|\g_2-\g_2'|\le 1}} |J(2^{n+1})-J(2^n)| \ll \frac{n^2}{2^{2n}},
$$
and hence
\be\label{Sigma21}
\int_{2^n}^{2^{n+1}} \left| \Sigma_{2,1}(y) \right|^2 = o(n^2 2^{-n}).
\ee
Define
$$
\Sigma_2(x;\chi) = (\log x) \sum_{j=1}^4 \Sigma_{2,j}(\log x).
$$
By \eqref{Sigma22}, \eqref{Sigma23}, \eqref{Sigma24}
and \eqref{Sigma21},
\[
\int_2^Y |\Sigma_2(e^y;\chi)|^2\, dy \le 4 \sum_{j=1}^4 
\sum_{n\le \frac{\log Y}{\log 2} + 1} 2^{2n}
 \int_{2^n}^{2^{n+1}}  |\Sigma_{2,j}(y)|^2\, dy
= o (Y \log^2 Y) \qquad (Y\to\infty).
\]
This completes the proof of Theorem \ref{doubleint}.
\end{proof}

%
%
\section{Proof of Lemma \ref{TT0}}\label{sec:TT0}
%
%

Put $y=\log x$.  For any $\g$ we have
\begin{align*}
\int_0^{2 \eps-2^{-n}} &\frac{ e^{-yv}}{\frac{1}{2}-v+i \g} \int_{v+2^{-n}}^{2 \eps} \frac{du}{(u-v)(\frac{1}{2}-u+i \g)} dv \\
&= \int_0^{2 \eps-2^{-n}}  e^{-yv} \Big(\frac{1}{\frac{1}{2}+i\g} +
 O(\frac{v}{\frac{1}{4}+{\g}^2}) \Big) 
\int_{v+2^{-n}}^{2 \eps} \Big(\frac{1}{\frac{1}{2}+i\g} + O(\frac{u}{\frac{1}{4}+{\g}^2}) \Big) \frac{du}{u-v} dv \\
&= \frac{M+E}{(1/2+i\g)^2},
\end{align*}
where
$$
M = 
\int_0^{2 \eps-2^{-n}}  e^{-yv} \( \log(2\eps-v)+\log 2^n \)\, dv = 
\frac{\log y + O(1)}{y}
$$
and
\begin{align*}
E &\ll \int_0^{2\eps-2^{-n}} e^{-yv} \int_{v+2^{-n}}^{2\eps}
\frac{u}{u-v}\, du\, dv \\
&\ll \int_0^{2\eps-2^{-n}} e^{-yv} \( 1 + v\log 2^n + v\log (2\eps-v) \)
\, dv \ll \frac{1}{y}.
\end{align*}
Hence, the zeros with $|\g| \le T_0$ contribute
$$
 \frac{2\log\log x}{\log x} \sum_{\substack{|\g|\le T_0 \\ \g \text{ distinct}}}
 \frac{m^2(\g)x^{i\g}}{1/2+i\g} + O\pfrac{\log^3 T_0}{\log x}.
$$

Next, let $\Sigma_3(x;T_0)$ be the sum over zeros with $T_0 < |\g| \le T$.
We have
\begin{multline}\label{lem3.7}
\int_{2^n}^{2^{n+1}} |\Sigma_3(e^y,T_0)|^2 dy \le
 \sum_{T_0 \leq |\g_1|, |\g_2| \leq T}  2^{2n+2} m(\g_1) m(\g_2)
\(\frac{1}{2} + i\g_1\)
 \(\frac{1}{2} - i\g_2\)
\\ \int_{2^n}^{2^{n+1}} e^{y i (\g_1 - \g_2)}
\iiiint\limits_{u_j\ge v_j+2^{-n}}
\frac{e^{-yv_1-yv_2}}{\prod_{j=1}^2 (u_j-v_j)(\frac{1}{2}-v_j+i \g_j)
(\frac{1}{2}-u_j+i \g_j)} du_j dv_j\ dy.
\end{multline}

The sum over $|\g_1-\g_2|<1$ on the right side of \eqref{lem3.7} is
\begin{align*}
&\ll \sum_{\substack{T_0 \leq |\g_1|, |\g_2| \leq T \\ |\g_1-\g_2| < 1}}
 \frac{2^{2n}m(\g_1)m(\g_2)}{|\g_1||\g_2|} \int_{2^n}^{2^{n+1}}
 \iiiint\limits_{u_j\ge v_j+2^{-n}}
 \frac{ e^{-yv_1-yv_2}}{(u_1-v_1)(u_2-v_2)\, }du_j dv_j\,  dy \\
&\ll \sum_{\substack{T_0 \leq |\g_1|, |\g_2| \leq T \\ |\g_1-\g_2| < 1}} 
  \frac{n^22^nm(\g_1)m(\g_2)}{|\g_1||\g_2|} 
\ll n^2 2^n \sum_{|\g| \ge T_0} \frac{\log^3(|\g|+3)}
{|\g|} \ll \frac{n^2 2^n \log^5 T_0}{{T_0}},
\end{align*}
applying Lemma~\ref{NTchi}.
The terms where $|\g_1-\g_2|>1$ on the right hand side of \eqref{lem3.7} total
\begin{align*}
&\ll \sum_{\substack{T_0 \leq |\g_1|, |\g_2| \leq T \\ |\g_1-\g_2| > 1}} 
  \frac{2^{2n}m(\g_1)m(\g_2)}{|\g_1||\g_2||\g_1-\g_2|} 
\iiiint\limits_{u_j\ge v_j+2^{-n}}
  \frac{ e^{-2^nv_1-2^nv_2}}{(u_1-v_1)(u_2-v_2)}du_j d v_j \\
&\ll \sum_{\substack{T_0 \leq |\g_1|, |\g_2| \\ |\g_1-\g_2| > 1}} 
\frac{n^2\log(|\g_1|+3)\log(|\g_2|+3)}{|\g_1||\g_2||\g_1-\g_2|} \ll n^2 
\frac{\log^5 T_0}{T_0}.
\end{align*}
by Lemma \ref{sumg1g2}.  Summing over $n$ proves the lemma.

\end{document}